\newcommand{\R}{\mathbb R}
\newcommand{\N}{\mathbb N}
\newcommand{\Z}{\mathbb Z}
\newcommand{\supp}{\text{supp}\,}
\begin{document}

\title*{A note on continuity of strongly singular Calder\'on-Zygmund operators in Hardy-Morrey spaces}
\titlerunning{Continuity of strongly singular C-Z operators in Hardy-Morrey spaces}

\author{Marcelo de Almeida, Tiago Picon and Claudio Vasconcelos}
\institute{Marcelo de Almeida \at Departamento de Matem\'atica, Universidade Federal de Sergipe, Aracaj\'u, SE,
49000-000, Brasil \email{marcelo@mat.ufs.br} \and Tiago Picon \at Departamento de Computa\c{c}\~ao e Matem\'atica, Universidade de S\~ao Paulo, Ribeir\~ao Preto, SP, 14040-901, Brasil \email{picon@ffclrp.usp.br} \and Claudio Vasconcelos \at Departamento de Matem\'atica,  Universidade Federal de S\~ao Carlos, S\~ao Carlos, SP,  13565-905 , Brasil \email{claudio.vasconcelos@estudante.ufscar.br}}
%
%
\maketitle

\abstract{{In this note we address the continuity of strongly singular Calder\'on-Zygmund operators on Hardy-Morrey spaces $\mathcal{HM}_{q}^{\lambda}(\R^n)$, assuming weaker integral conditions on the associated kernel. Important examples that falls into this scope are pseudodifferential operators on the H\"ormander classes $OpS^{m}_{\sigma,\mu}(\R^{n})$ with $0<\sigma \leq 1$, $0 \leq \mu <1$, $\mu \leq \sigma$ and $m\leq -n(1-\sigma)/2$.}}

\section{Introduction}

J. \'Alvarez and M. Milman \cite{AlvarezMilman} introduced a new class of Calder\'on-Zygmund operators, called \textit{strongly singular Calder\'on-Zygmund operator}  {and established continuity of those operators in real Hardy space $H^q(\R^n)$.}   {More precisely,} a continuous function $K \in C(\R^{2n} \backslash \Delta)$, where $\Delta=\left\{(x,x):\, x \in \R^n \right\}$ is a $\delta$-kernel of type $\sigma$, if there exists  {{some}} $0<\delta\leq1$ and $0<\sigma \leq 1$ such that
\begin{equation} \label{pontual_kernel_tipo_sigma}
|K(x,y)-K(x,z)|+|K(y,x)-K(z,x)| \leq C \dfrac{|y-z|^{\delta}}{|x-z|^{n+ \frac{\delta}{\sigma}}},
\end{equation}
for all $|x-z| \geq 2|y-z|^{\sigma}$.  {A bounded  linear  operator} $T: \mathcal{S}(\R^{n}) \rightarrow \mathcal{S}'(\R^{n})$ is called a \textit{strongly singular Calder\'on-Zygmund operator},  %
if it is associated to a $\delta$-kernel of type $\sigma$ in the sense 
$
\langle Tf,g \rangle = \int{\int{K(x,y)f(y)g(x)dy}dx}$, for all  $ f,g \in \mathcal{S}(\R^n)$ with disjoint supports; it has bounded extension from $L^2(\R^n)$ to itself and in addition $T$ and $T^{\ast}$ extend to a continuous operator from $L^p(\R^n)$ to $L^2(\R^n)$, where 
$	\dfrac{1}{p} = \dfrac{1}{2} + \dfrac{\beta}{n}$ for some  $(1-\sigma) \dfrac{n}{2} \leq \beta < \dfrac{n}{2}$. 
When $\sigma=1$ and $\beta=0$ we recover the standard non-convolution Calder\'on-Zygmund operators (see \cite{Coifman-Meyer}). 

The authors in \cite[Theorem 2.2]{AlvarezMilman} established the continuity of those classes of operators in real Hardy spaces $H^q(\R^n)$ as follows: under the condition $T^{\ast}(1)=0$, strongly singular Calder\'on-Zygmund operators associated to a kernel satisfying \eqref{pontual_kernel_tipo_sigma} are bounded from $H^{q}(\R^n)$ to itself for every $q_{0}<q\leq 1$ where
	\begin{equation} \label{critical-index}
		\frac{1}{q_{0}} := \frac{1}{2}+\frac{\beta \left( \frac{\delta}{\sigma}+\frac{n}{2}\right)}{n\left(\frac{\delta}{\sigma}-\delta+\beta\right)} \ .
	\end{equation}
The case $q=q_{0}$ is still open, however the conclusion continues to hold replacing the target space by $L^{q_0}(\R^n)$ (see \cite[Theorem 3.9]{AlvarezMilmanVectorValued}).

 {In this note, we establish results on continuity of strongly singular Calder\'on-Zygmund operators on Hardy-Morrey spaces $\mathcal{HM}_{q}^{\lambda}(\R^n)$ assuming weaker integral conditions on the kernel, introduced by the second and third authors in \cite{VasconcelosPicon}. Let $0<  \sigma \leq 1$,  $r \geq 1$  and $\delta>0$. We say that $K(x,y)$ associated to $T$ is a $D_{\delta,r}$ kernel of type $\sigma$ if} 
\begin{equation} \label{Ls-hormander}
	\displaystyle{\left( \int_{C_j(z,\ell)}{|K(x,y)-K(x,z)|^r+|K(y,x)-K(z,x)|^rdx} \right)^{\frac{1}{r}} \lesssim |C_j(z,\ell)|^{\frac{1}{r}-1} \, 2^{-j\delta}}
\end{equation}
for $\ell \geq 1$ and
\begin{align} \label{1-hormander-2a}
	&\left( \int_{C_j(z,\ell^{\rho})}{|K(x,y)-K(x,z)|^r+|K(y,x)-K(z,x)|^rdx} \right)^{\frac{1}{r}} \nonumber \\
	&\quad \quad \quad \quad \quad \quad \quad \quad \quad \quad \quad \quad \quad \quad \quad \quad \quad \lesssim |C_j(z,\ell^{\rho})|^{\frac{1}{r}-1+\frac{\delta}{n} \left(\frac{1}{\rho}-\frac{1}{\sigma}  \right)}  2^{-\frac{j\delta}{ \rho}}
\end{align}
for $\ell <1$, where $z \in \R^n$, $|y-z|<\ell$, $0< \rho \leq \sigma$ and $C_j(z,\eta):=\{ x \in \R^n: \ 2^j \eta<|x-z| \leq 2^{j+1}\eta \}$.  {Those conditions also covers the standard case $\sigma=1$, by choosing $\rho=\sigma$ in \eqref{1-hormander-2a}, and in that case both conditions are the same.} It is easy to check that $D_{\delta,r_{1}}$ condition is stronger than $D_{\delta,r_{2}}$ for $r_{1} > r_{2}$ and any $\delta>0$ and $0<\sigma\leq1$. Moreover, $\delta-$kernels of type $\sigma$  {satisfying \eqref{pontual_kernel_tipo_sigma} also satisfies $D_{\delta,r}$ condition for all $r \geq 1$. It has also been shown in \cite[Proposition 5.3]{VasconcelosPicon} that pseudodifferential operators associated to symbols in the H\"ormander classes $S^{m}_{\sigma,\mu}(\R^{n})$ with $0<\sigma \leq 1$, $0 \leq \mu <1$, $\mu \leq \sigma$ and $m\leq -n(1-\sigma)/2$, satisfies the $D_{1,r}$ condition for $1 \leq r \leq 2$. We refer to \cite{VasconcelosPicon} for more details.} In particular, the continuity of operators associated to symbols given by $e^{i|\xi|^{\sigma}}|\xi|^{-m}$ away from the origin are also examples of this type of operators and have been extensively studied, for instance in \cite{Abbico-Picon-Ebert, Fefferman1970,Hirschman1959,Wainger1965}. 

Our main result is the following:


\begin{theorem} \label{theorem-CZ-hardy-morrey}
	Let $T$ to be a strongly singular Calder\'on-Zygmund operator associated to a $D_{\delta,r}$ kernel of type $\sigma$ for some $1\leq r \leq 2$. Under the assumptions that $T^{\ast}(x^{\alpha})=0$ for every $|\alpha|\leq \lfloor \delta \rfloor$, $T$ can be extended to a bounded operator from $\mathcal{HM}_{q}^{\lambda}(\R^n)$ to itself for any $0<q\leq \lambda < r$ and $q_{0}<q\leq 1$, where $q_{0}$ is given by \eqref{critical-index}.
\end{theorem}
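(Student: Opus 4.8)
The plan is to argue via the atomic--molecular method, reducing the boundedness of $T$ on $\mathcal{HM}_{q}^{\lambda}(\R^n)$ to a uniform estimate on atoms. First I would invoke the atomic decomposition of the Hardy--Morrey space: any $f \in \mathcal{HM}_{q}^{\lambda}$ decomposes as $f = \sum_k c_k a_k$ with $(q,\lambda,r)$-atoms $a_k$ --- each supported in a ball $B_k = B(x_k, t_k)$, normalized so that $\|a_k\|_{L^r} \lesssim |B_k|^{1/r - 1/\lambda}$ (here the hypothesis $\lambda < r$ is precisely what allows $L^r$-atoms), and carrying vanishing moments $\int a_k(x) x^{\alpha}\,dx = 0$ for $|\alpha| \leq \lfloor \delta \rfloor$ --- and the coefficients $\{c_k\}$ control $\|f\|_{\mathcal{HM}_{q}^{\lambda}}$. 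Since $T$ is already defined and $L^2$-bounded, it suffices by the $q\leq 1$ quasi-triangle inequality to show that $Ta$ is, uniformly in the atom $a$, a fixed multiple of a Hardy--Morrey molecule, and then to sum. The vanishing moments of the molecule come for free from the hypothesis $T^{\ast}(x^{\alpha})=0$, since $\langle Ta, x^{\alpha}\rangle = \langle a, T^{\ast} x^{\alpha}\rangle = 0$ for $|\alpha|\leq \lfloor \delta \rfloor$.

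The heart of the matter is the size and decay estimate for $Ta$, which I would split according to the radius $t = t_k$ of the supporting ball, reflecting the two regimes of the kernel conditions \eqref{Ls-hormander} and \eqref{1-hormander-2a}. For $t \geq 1$ the operator behaves like a standard Calder\'on--Zygmund operator: on the local region $|x-x_0|\leq 2t$ I would use the $L^2\to L^2$ boundedness together with H\"older's inequality to pass to the Morrey quasi-norm, while on the dyadic annuli $C_j(x_0,t)$ I would subtract the Taylor polynomial of $K(x,\cdot)$ at $x_0$, exploit the moment cancellation of $a$, and apply the large-scale condition \eqref{Ls-hormander} to produce decay $2^{-j\delta}$ that is summable in $j$. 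For $t<1$ the relevant molecular scale is the dilated one $t^{\sigma}$: on the enlarged local region $|x - x_0|\leq 2t^{\sigma}$ I would use the $L^p\to L^2$ mapping property of $T$ (this is where the exponent $\beta$, and hence the critical index $q_0$, enters), and on the outer annuli $C_j(x_0, t^{\sigma})$ --- where $|x-x_0|\geq 2t^{\sigma} \geq 2|y-x_0|^{\sigma}$ so that the kernel is smooth --- I would combine the moment cancellation with the small-scale condition \eqref{1-hormander-2a}, optimizing over the auxiliary parameter $\rho \in (0,\sigma]$ to extract summable decay.

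The main obstacle will be the small-radius global estimate, where the unusual scaling in \eqref{1-hormander-2a} must be balanced against the Morrey normalization. The subtle point is that the power of $t$ produced by the local $L^p\to L^2$ bound and the power coming from the tail sum must combine into the exponent demanded by the molecular definition; tracking the interplay of the Morrey parameters $q$ and $\lambda$, the regularity $\delta$, the type $\sigma$, and the exponent $\beta$ is exactly what forces the restriction $q > q_0$ --- it is precisely the condition under which the dyadic series in $j$, measured in the Morrey quasi-norm, converges. Care is also needed at the transition region $t \leq |x-x_0| \leq t^{\sigma}$, which is absorbed into the local $L^p\to L^2$ estimate rather than the kernel-smoothness estimate. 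Once the uniform molecular bound $\|Ta\|_{\mathcal{HM}_{q}^{\lambda}}\lesssim 1$ is established in both regimes, summing over the atomic decomposition with the $q$-quasi-norm completes the proof.
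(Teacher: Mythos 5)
Your proposal follows essentially the same route as the paper: decompose $f$ into $(q,\lambda,r)$-atoms, show that $T$ maps each atom to a uniform multiple of an $L^{r}$-molecule (local part via the $L^2$ and $L^p\to L^2$ bounds, tails via the moment cancellation together with \eqref{Ls-hormander} for $\ell_Q\geq 1$ and \eqref{1-hormander-2a} with the optimized $\rho$ for $\ell_Q<1$), and conclude by molecular synthesis; the restriction $q>q_0$ arises exactly where you place it. The only slight misattribution is that the hypothesis $\lambda<r$ is needed for the molecular synthesis step (Theorem \ref{thm-mol-HM-2}), not for the $L^r$-atomic decomposition itself, which holds whenever $q<r$.
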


 {The proof relies on showing that $T$ maps atoms into molecules and a molecular decomposition in $\mathcal{HM}_{q}^{\lambda}(\R^n)$ for $0<q\leq 1$ and $q\leq \lambda <\infty$ under restriction $\lambda<r$ (see Theorem \ref{thm-mol-HM-2} and the Remark \ref{remark3}). }
 {As an immediate consequence of previous theorem, we also obtain the continuity of standard non-convolution Calder\'on-Zygmund operators ($\sigma=1$) associated to kernels satisfying integral conditions. The corresponding result in the convolution setting for kernels satisfying derivative conditions can be found in \cite[Section 2.2]{JiaWang-SingularIntegrals}.}

\begin{corollary}
	Under the same hypothesis of the previous theorem, if $T$ is a standard Calder\'on-Zygmund operator, then it is bounded from $\mathcal{HM}_{q}^{\lambda}(\R^n)$ to itself provided that $n/(n+\delta)<q\leq 1$.
\end{corollary}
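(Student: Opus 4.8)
The plan is to read off the corollary as the $\sigma=1$ specialization of Theorem \ref{theorem-CZ-hardy-morrey}. As recalled in the introduction, a standard non-convolution Calderón-Zygmund operator is exactly a strongly singular one with $\sigma=1$: the admissibility range $(1-\sigma)\tfrac n2\le\beta<\tfrac n2$ collapses to $0\le\beta<\tfrac n2$, and the auxiliary mapping $T,T^{*}:L^{p}\to L^{2}$ with $1/p=1/2+\beta/n$ reduces, at the natural value $\beta=0$, to the $L^{2}$-boundedness already built into the definition. The moment cancellation $T^{*}(x^{\alpha})=0$ for $|\alpha|\le\lfloor\delta\rfloor$ is assumed exactly as in the theorem.

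Next I would check that the kernel hypothesis transfers. For $\sigma=1$ the two integral conditions \eqref{Ls-hormander} and \eqref{1-hormander-2a} coincide (take $\rho=\sigma=1$), and a standard Calderón-Zygmund kernel with regularity of order $\delta$ satisfies the pointwise bound \eqref{pontual_kernel_tipo_sigma}; by the remark following \eqref{1-hormander-2a} it then satisfies the $D_{\delta,r}$ condition for every $r\ge1$, in particular for some $1\le r\le2$, as required. Hence all structural hypotheses of Theorem \ref{theorem-CZ-hardy-morrey} are in force, and its proof scheme applies directly: $T$ sends $(q,\lambda)$-atoms to $(q,\lambda)$-molecules, which are resummed by the molecular decomposition (Theorem \ref{thm-mol-HM-2}).

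It remains only to identify the critical index. Setting $\sigma=1$ in \eqref{critical-index} makes the term $\delta/\sigma-\delta$ in the denominator vanish, so the denominator reduces to $n\beta$ and the factor $\beta$ cancels, leaving
\begin{equation*}
\frac{1}{q_{0}}=\frac{1}{2}+\frac{\delta/\sigma+n/2}{n}=\frac12+\frac{\delta}{n}+\frac12=\frac{n+\delta}{n},
\end{equation*}
so that $q_{0}=n/(n+\delta)$, a value independent of $\beta$. Theorem \ref{theorem-CZ-hardy-morrey} then gives boundedness of $T$ on $\mathcal{HM}_{q}^{\lambda}(\R^{n})$ for every $q$ with $n/(n+\delta)<q\le1$ (and $q\le\lambda<r$), which is the claim. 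I expect the only delicate point to be this cancellation: substituting $\beta=0$ and $\sigma=1$ simultaneously into \eqref{critical-index} produces the indeterminate form $0/0$, so one must fix $\sigma=1$ first, where $q_{0}=n/(n+\delta)$ holds for every $\beta>0$ and extends continuously to the physical value $\beta=0$, rather than taking the two limits in the wrong order. Consistently, in this regime the threshold $n/(n+\delta)$ is dictated purely by the kernel regularity $\delta$ through the molecular decomposition, the strongly singular gain being absent when $\sigma=1$.
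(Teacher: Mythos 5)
Your proposal is correct and follows the same route the paper intends: the corollary is the $\sigma=1$ specialization of Theorem \ref{theorem-CZ-hardy-morrey}, and your cancellation of $\beta$ in \eqref{critical-index} correctly yields $q_{0}=n/(n+\delta)$. For the degenerate case $\beta=0$, rather than arguing by continuity in $\beta$, the cleaner justification is the remark at the end of the paper's proof of the theorem: when $\sigma=1$ the two kernel conditions coincide and only the constraint $n(r/q-1)<s<n(r-1)+r\delta$ is imposed, which is nonempty exactly when $q>n/(n+\delta)$, independently of $\beta$.
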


The organization of the paper is as follows. In Section \ref{Section:HardyMorreySpaces} we recall some basic definitions and a general atomic and molecular decomposition of Hardy-Morrey spaces. In particular, in Section \ref{Subsection:AtomicDecomposition} we present an atomic decomposition in terms of $L^r-$atoms by showing the equivalence with classical $L^{\infty}$ atomic space and in Section \ref{Subsection:MolecularDecomposition} we show an appropriated molecular decomposition of Hardy-Morrey spaces. Finally, in Section \ref{Section:ProofOfMainTheorem} we present the proof of Theorem \ref{theorem-CZ-hardy-morrey} showing that  $T$ maps atoms into molecules.
\\

\noindent  {\textbf{Notation:} throughout this work, the symbol $f \lesssim g$ means that there exist a constant $C>0$, not depending on $f$ nor $g$, such that $f \leq C \, g$. By a dyadic cube we mean cubes on $\R^n$, open on the right whose vertices are adjacent points of the lattice $(2^{-k}\Z)^n$ for some $k\in \Z$. Given a set $A\subset \R^n$ we denote by $|A|$ its Lebesgue measure. Given a cube $Q$ (dyadic or not), we will always denote its center and side-length by $x_Q$ and $\ell_Q$ respectively. By $Q^{\ast}$ we mean the cube with same center as $Q$ and side-length $2\ell_Q$. We also denote by $\fint_{Q}f(x)dx := \frac{1}{|Q|}\int_{Q}f(x)dx$.}

\section{Hardy-Morrey spaces $\mathcal{HM}_q^\lambda(\R^n)$} \label{Section:HardyMorreySpaces}

In this section, we recall and present some properties of Hardy-Morrey spaces. For $0 < q\leq \lambda<\infty$, the Morrey spaces, denoted by $\mathcal{M}^\lambda_q(\mathbb{R}^n)$, are defined to be the set of measurable functions $f\in L^q_{loc}(\mathbb{R}^n)$ such that 
$$
\displaystyle{	\Vert f \Vert_{\mathcal{M}^\lambda_q}:=\sup_{J}\vert J\vert^{\frac{1}{\lambda}-\frac{1}{q}}\left( \int_J\vert f(y)\vert^qdy \right)^{\frac{1}{q}}<\infty},
$$
where the supremum is taken over all cubes $J\subset \R^n$. 

For any tempered distribution $f\in\mathcal{S}'(\mathbb{R}^n)$ and any fixed $\varphi \in \mathcal{S}(\mathbb{R}^{n})$ with $\int \varphi \neq 0$, consider the smooth maximal function $M_{\varphi}f(x)=\sup_{t>0} \left\vert (\varphi_{t}\ast f)(x)\right\vert $, where $\varphi_{t}(x)=t^{-n}\varphi(x/t)$. For any $0<q\leq \lambda<\infty$, we say that $f\in\mathcal{S}'(\mathbb{R}^{n})$ belongs to Hardy-Morrey space $\mathcal{HM}_q^{\lambda}(\R^n)$ if the smooth maximal function $M_{\varphi}f\in\mathcal{M}_q^{\lambda}(\R^n)$. The functional $\Vert f\Vert_{\mathcal{HM}_q^{\lambda}}:=\Vert M_{\varphi}f\Vert_{\mathcal{M}_q^{\lambda}}$ defines a quasi-norm as $0<q<1$ and is a norm if $q\geq1$. 

In the same way as Hardy spaces, the Hardy-Morrey spaces have also equivalent maximal characterizations (see \cite[Section 2]{JiaWang-HM}).  
Clearly, Hardy-Morrey spaces cover the classical Hardy spaces $H^p(\R^n)$ when $\lambda = q$ and Morrey spaces $\mathcal{M}^\lambda_q(\mathbb{R}^n)$ if $1<q \leq \lambda < \infty$.

\subsection{Atomic decomposition in Hardy-Morrey spaces} \label{Subsection:AtomicDecomposition}




\begin{definition} \cite[Definiton 2.2]{JiaWang-SingularIntegrals}. Let $0<q\leq 1\leq r \leq \infty$ with $q<r$ and $q \leq \lambda<\infty$. A measurable function $a_{Q}$ is called a $(q,\lambda,r)-$atom if it is supported on a cube $Q\subset \R^n$ and satisfies:
	(i)  $\| a_{Q} \|_{L^r} \leq \vert Q\vert ^{\frac{1}{r}-\frac{1}{\lambda}}$  and $(ii) \ \int_{\mathbb{R}^n} x^{\alpha}a_{Q}(x)dx =0$
	for all $\alpha\in\mathbb{N}_{0}^{n}$ such that $\vert\alpha\vert\leq  N_{q}:=\left\lfloor n\left({1}/{q}-1\right)\right\rfloor$, where $\lfloor \cdot \rfloor$ denotes the floor function.  
\end{definition}

The following lemma is an extension of \cite[Proposition 2.5]{MT} and the proof will be presented for completeness.

\begin{proposition}\label{prop1} Let $0<q\leq 1 \leq r \leq \infty$ with $q<r$ and $q\leq\lambda<\infty$ with $\lambda \leq r$. If $f$ is a compactly supported function in $L^{r}(\R^n)$ satisfying the moment condition 
	\begin{align}\label{moment}
		\int_{\R^{n}} x^{\alpha}f(x)dx=0\; \text{ for all }\; |\alpha|\leq  N_{q},
	\end{align}
	then it belongs to $\mathcal{HM}_q^{\lambda}(\R^n)$ and moreover $\|f\|_{\mathcal{HM}_{q}^{\lambda}} \lesssim \|f\|_{L^{r}}|Q|^{1/ \lambda-1/r}$ for all cube $Q\supseteq \supp(f)$. In particular, if $f=a_Q$, then $\| a_Q \|_{\mathcal{HM}_{q}^{\lambda}} \lesssim 1$ uniformly.
\end{proposition}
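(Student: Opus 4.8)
The plan is to estimate directly the Hardy--Morrey quasi-norm $\|f\|_{\mathcal{HM}_q^\lambda}=\|M_\varphi f\|_{\mathcal M^\lambda_q}$, reducing to a single cube. Since $1/\lambda-1/r\ge 0$ (this is exactly where the hypothesis $\lambda\le r$ enters), the target bound $\|f\|_{L^r}|Q|^{1/\lambda-1/r}$ is nondecreasing in $|Q|$, so it suffices to prove it for one fixed cube $Q$ containing $\supp f$; write $x_Q$ for its center and $\ell_Q$ for its side length. The heart of the matter is the usual near/far splitting of the maximal function relative to $Q$.

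First I would establish the pointwise decay of $M_\varphi f$ away from $Q$. For $x$ outside a fixed dilate of $Q$ and any $t>0$ I subtract from $\varphi_t(x-\cdot)$ its Taylor polynomial of degree $N_q$ centered at $x_Q$; the moment condition \eqref{moment} kills this polynomial against $f$, and Taylor's theorem together with the Schwartz decay of $\varphi$ yields
\[
M_\varphi f(x)\lesssim \|f\|_{L^1}\,\frac{\ell_Q^{\,N_q+1}}{|x-x_Q|^{\,n+N_q+1}}.
\]
Estimating $\|f\|_{L^1}\le \|f\|_{L^r}|Q|^{1-1/r}$ by H\"older and writing $|Q|=\ell_Q^n$, this becomes
\[
M_\varphi f(x)\lesssim \|f\|_{L^r}\,|Q|^{-1/r}\Big(\tfrac{\ell_Q}{|x-x_Q|}\Big)^{n+N_q+1},\qquad |x-x_Q|> c\,\ell_Q .
\]

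Next I would bound the Morrey norm by testing against an arbitrary cube $J$ and splitting $\int_J (M_\varphi f)^q$ into the part over $J$ near $Q$ and the part over $J$ far from $Q$. On the near part I use that $q<r$ and H\"older, followed by the $L^r\to L^r$ boundedness of the maximal operator (standard for $r>1$, the endpoint being routine), to get a contribution $\lesssim \|f\|_{L^r}^q\,|J\cap cQ|^{1-q/r}$; since $|J\cap cQ|\le\min(|J|,|Q|)$ and $1/q-1/r>0$, the weight $|J|^{1/\lambda-1/q}$ then produces $\lesssim\|f\|_{L^r}|Q|^{1/\lambda-1/r}$ in both regimes $|J|\ge|Q|$ and $|J|<|Q|$, the sign conditions $1/\lambda-1/q\le 0\le 1/\lambda-1/r$ being precisely what is needed. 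On the far part I insert the decay estimate and integrate; the integral $\int_{|x-x_Q|>c\ell_Q}|x-x_Q|^{-(n+N_q+1)q}\,dx$ converges exactly because $(n+N_q+1)q>n$, which is guaranteed by $N_q=\lfloor n(1/q-1)\rfloor$. A short computation, distinguishing whether $J$ is large or small and near or far from $Q$ and using $n/\lambda\le n/q<n+N_q+1$ (again from $q\le\lambda$ and the moment count), shows this contribution is also $\lesssim\|f\|_{L^r}|Q|^{1/\lambda-1/r}$.

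I expect the main obstacle to be the uniformity over \emph{all} cubes $J$, and in particular the small cubes $J$ sitting inside or abutting $Q$, for which the Morrey weight $|J|^{1/\lambda-1/q}$ is large. The only mechanism controlling them is the assumption $\lambda\le r$: it forces $1/\lambda-1/r\ge 0$, so that the local $L^r$ mass of $M_\varphi f$ on a small cube is penalized by a favorable power of $|J|/|Q|\le 1$. Without this hypothesis a maximally concentrated atom would already have unbounded Hardy--Morrey norm, so carefully tracking these exponents --- rather than any single estimate --- is the crux. The concluding statement $\|a_Q\|_{\mathcal{HM}_q^\lambda}\lesssim 1$ is then immediate from $\|a_Q\|_{L^r}\le|Q|^{1/r-1/\lambda}$.
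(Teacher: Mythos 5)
Your proposal follows essentially the same route as the paper's proof: the same near/far splitting of $\int_J(M_\varphi f)^q$ relative to a dilate of $Q$, the same $L^r$-boundedness (resp.\ weak $(1,1)$ endpoint) of $M_\varphi$ on the near part, the same Taylor-polynomial subtraction exploiting the moment condition on the far part, and the same case analysis in $|J|$ versus $|Q|$ where $\lambda\le r$ enters. The argument is correct and the exponent bookkeeping matches the paper's.
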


\begin{proof}
	Let $J \subset \R^n$ be an arbitrary cube and $Q$ a cube such that $\supp(f) \subseteq Q$.  {Split the integral over $J$ into $J \cap Q^{\ast}$ and $J\setminus Q^{\ast}$.} Since the maximal function $M_{\varphi}$ is bounded from $L^r(\R^n)$ to itself for every $1<r\leq \infty$, it follows that 
	\begin{align*}
		\int_{J \cap Q^{*}}\vert M_{\varphi}f(x)\vert^qdx
		& \leq \Vert M_{\varphi}f\Vert_{L^r}^q \,  |J \cap Q^{\ast}|^{1-\frac{q}{r}} 
	 \lesssim \| f \|_{L^r}^q \, |J \cap Q^{\ast}|^{1-\frac{q}{r}}.
	\end{align*}
	For $r=1$ and $0<q<1$, setting $R=\| f\|_{L^1}|J\cap Q^{\ast}|^{-1}$ and using that $M_{\varphi}$ satisfies weak $(1,1)$ inequality we get the analogous inequality:
		\begin{align}
			&\int_{J \cap Q^{*}}\vert M_{\varphi}f(x)\vert^qdx \nonumber 
			\, \simeq  \int_{0}^{\infty} \omega^{q-1} \left| \{ x \in J\cap Q^{\ast}: \, |M_{\varphi}f(x)|>\omega  \} \right|d\omega \nonumber \\
			&\quad \lesssim \, |J\cap Q^{\ast}| \int_{0}^{R} \omega^{q-1}d\omega +  \| f\|_{L^1} \int_{R}^{\infty} \omega^{q-2}d\omega \lesssim \| f \|_{L^1}^q \, |J \cap Q^{\ast}|^{1-{q}}. \label{weak-ineq}
	\end{align}
	If $|Q|<|J|$, since $q/\lambda-1 \leq 0$ and $1-q/r>0$ for all $1\leq r <\infty$, one has $|J|^{{q}/{\lambda}-1} |J \cap Q^{\ast}|^{1-{q}/{r}} \leq |Q|^{1-q/r}$. On the other hand, if $|J|<|Q|$, using that $\lambda \leq r$ it follows
	$ \displaystyle{
	|J|^{\frac{q}{\lambda}-1} |J \cap Q^{\ast}|^{1-\frac{q}{r}} = |J|^{\frac{q}{\lambda}-\frac{q}{r}} \, \left( \frac{|J \cap Q^{\ast}|}{|J|} \right)^{1-\frac{q}{r}} \leq |Q|^{1-\frac{q}{r}}}.
	$
	Hence
	$
	|J|^{\frac{q}{\lambda} -1}\int_{J \cap Q^{*}}\vert M_{\varphi}f(x)\vert^qdx
	\lesssim \| f \|_{L^r}^{q} \, |Q|^{1-\frac{q}{r}}.
	$
	
	To estimate the integral on  $J\backslash Q^{\ast}$, using the moment condition \eqref{moment} we write 
	$
	\varphi_{t}\ast f (x)= \int f(y) \left(\varphi_{t}(x-y)-P_{\varphi_t}(y) \right)dy,
	$
	where $\displaystyle{P_{\varphi_t}(y)=\sum_{\vert \alpha\vert \leq N_q}{C_{\alpha}\, \partial^{\alpha} \varphi_{t}(x) \, (-y)^{\alpha}}}$ denotes the Taylor polynomial of degree $N_{q}$ of the function $y \mapsto \varphi_{t}(x-y)$.
	The standard estimate of the remainder term {(see \cite[p. 106]{Stein})} 
	yields 
	$
	\left| \varphi_{t}(x-y)- P_{\varphi_t}(y) \right| \lesssim {|y-x_Q|^{N_q+1}}{|x-x_Q|^{-(n+N_q+1)}}
	$ 
	and  since $\supp(f)\subseteq Q$, we have the pointwise control 
	\begin{align*}
		\left\vert  M_{\varphi}f(x) \right\vert &\lesssim \frac{\ell_Q^{N_q+1}}{|x-x_Q|^{n+N_q+1}}  \int_{Q}{|f(y)|dy } 
		\lesssim \frac{\ell_Q^{N_q+1}}{|x-x_Q|^{n+N_q+1}} \, \| f \|_{L^r} \, |Q|^{1-\frac{1}{r}}.
	\end{align*}
	If $|Q|<|J|$, since $N_q+1 > n\left(1/q-1\right)$, we estimate $|J|^{\frac{q}{\lambda} -1} \int_{J \setminus Q^{*}}\vert M_{\varphi}f(x)\vert^qdx$ by
	\begin{align*}
		\| f \|_{L^r}^q \, |Q|^{q\left(\frac{1}{\lambda}-\frac{1}{r}+\frac{N_q}{n}+\frac{1}{n}+1\right)-1}  \int_{(Q^{\ast})^c} \vert x-x_Q\vert ^{-q(n+N_q+1)}dx \lesssim \| f \|_{L^r}^q \, |Q|^{\frac{q}{\lambda}-\frac{q}{r}}.
	\end{align*}
	Finally, if $|J|<|Q|$ 
	\begin{align*}
		|J|^{\frac{q}{\lambda} -1}  \int_{J \setminus Q^{*}}\vert M_{\varphi}f(x)\vert^qdx &\lesssim\| f \|_{L^r}^q \, |J|^{\frac{q}{\lambda} -1} \, |Q|^{q-\frac{q}{r}} \, \ell_{Q}^{\, -nq} \,  \ |J \backslash Q^{\ast}| 
		 \lesssim \| f \|_{L^r}^q \, |Q|^{\frac{q}{\lambda}-\frac{q}{r}},
	\end{align*}
	which concludes the proof. 
\end{proof}

Given $1\leq r \leq \infty$, we denote the atomic space $\textbf{at}\mathcal{HM}_{q}^{\lambda,r}(\R^n)$ by the collection of $f\in\mathcal{S}'(\mathbb{R}^n)$ such that
$f =\sum_{Q\,:\, \text{dyadic}}s_{Q}a_{Q} \,\, \text{ in } \,\, {\mathcal{S}'(\mathbb{R}^n)}$,
where $\{a_Q\}_Q$ are  $(q,\lambda,r)$-atoms and $\{s_Q\}_{Q}$ is a sequence of complex scalars satisfying  
\begin{equation}\nonumber
\Vert \{s_Q\}_{Q}\Vert_{{\lambda,q}}:=\sup_{J} \left\{\left({\vert J\vert^{\frac{q}{\lambda}-1}} \sum_{\substack{Q\subseteq J}}\left(\vert Q\vert^{\frac{1}{q}-\frac{1}{\lambda}}\,\vert s_{Q}\vert\right)^q \right)^{\frac{1}{q}}\right\}<\infty.
\end{equation}
The functional $\Vert f \Vert_{\textbf{at}\mathcal{HM}_{q}^{\lambda,r}}:= \inf \left\{  \Vert \{s_Q\}_{Q}\Vert_{{\lambda,q}} : f =\sum_{Q}s_{Q}a_{Q} \right\}$,
where the infimum is taken over all such atomic representations, defines a quasi-norm in $\textbf{at}\mathcal{HM}_{q}^{\lambda,r}(\R^n)$. Clearly, if $1\leq r_{1}<r_{2}\leq \infty$ then $\textbf{at}\mathcal{HM}_{q}^{\lambda,r_{2}}(\R^n)$ is continuously embedded in $\textbf{at}\mathcal{HM}_{q}^{\lambda,r_{1}}(\R^n)$. The converse of this simple embedding is the content of the next result.

\begin{lemma}\label{atomic-lemma} 
	Let $0<q\leq 1 \leq r$ with $q<r$ and $q\leq\lambda<\infty$. Then $\textbf{at}\mathcal{HM}_{q}^{\lambda,r}(\R^n) = \textbf{at}\mathcal{HM}_{q}^{\lambda,\infty}(\R^n)$ with comparable quasi-norms.
\end{lemma}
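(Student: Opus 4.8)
The embedding $\textbf{at}\mathcal{HM}_{q}^{\lambda,\infty}(\R^n)\hookrightarrow \textbf{at}\mathcal{HM}_{q}^{\lambda,r}(\R^n)$ recorded just before the statement (every $(q,\lambda,\infty)$-atom is, by H\"older, a $(q,\lambda,r)$-atom) takes care of one quasi-norm inequality, so the whole task is to prove $\|f\|_{\textbf{at}\mathcal{HM}_{q}^{\lambda,\infty}}\lesssim \|f\|_{\textbf{at}\mathcal{HM}_{q}^{\lambda,r}}$. Given an $L^{r}$-atomic representation $f=\sum_{Q}s_{Q}a_{Q}$, the plan is to replace each $(q,\lambda,r)$-atom $a_{Q}$ by a superposition of $(q,\lambda,\infty)$-atoms and then to estimate the coefficient functional of the resulting pooled family directly. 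Thus everything reduces to two tasks: (a) a decomposition of a single $(q,\lambda,r)$-atom into $L^{\infty}$-atoms with quantitative control of heights and supports, and (b) the Morrey bookkeeping that reassembles these pieces across the sum over $Q$.

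For (a), I would run a Calder\'on--Zygmund decomposition on $a$ itself — not on its maximal function — so that every piece stays inside $Q$. Working in the dyadic grid subordinate to $Q$ and writing $M_{d}$ for the corresponding dyadic maximal function, for each $k\in\Z$ let $\{Q_{k,i}\}_{i}$ be the maximal dyadic subcubes of $Q$ on which the average of $|a|$ exceeds $2^{k}$; these satisfy $\bigcup_{i}Q_{k,i}=\{M_{d}a>2^{k}\}\subseteq Q$ and $\fint_{Q_{k,i}}|a|\simeq 2^{k}$. Subtracting on each $Q_{k,i}$ the Taylor polynomial of degree $N_{q}$ and telescoping the good parts in $k$ — the series starting cleanly because $\fint_{Q}a=0$ — produces $a=\sum_{k,i}A_{k,i}$ in $\mathcal{S}'(\R^n)$ with $\supp A_{k,i}\subseteq Q_{k,i}$, with $A_{k,i}$ having vanishing moments up to order $N_{q}$, and with $\|A_{k,i}\|_{L^{\infty}}\lesssim 2^{k}$. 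Normalising $h_{k,i}:=A_{k,i}/\mu_{k,i}$ with $\mu_{k,i}:=\|A_{k,i}\|_{L^\infty}|Q_{k,i}|^{1/\lambda}\lesssim 2^{k}|Q_{k,i}|^{1/\lambda}$ turns each $h_{k,i}$ into a $(q,\lambda,\infty)$-atom, and a layer-cake computation gives the key identity
\[
\sum_{(k,i):\,Q_{k,i}\subseteq J}\bigl(|Q_{k,i}|^{\frac1q-\frac1\lambda}\mu_{k,i}\bigr)^{q}\;\lesssim\;\sum_{(k,i):\,Q_{k,i}\subseteq J}2^{kq}|Q_{k,i}|\;\lesssim\;\int_{J\cap Q}(M_{d}a)^{q}
\]
for every cube $J$. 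Two estimates for the right-hand side, both coming from the $L^{r}$-boundedness of $M_{d}$ together with $\|a\|_{L^r}\le|Q|^{1/r-1/\lambda}$ (as in the proof of Proposition \ref{prop1}), will drive everything: $\int_{Q}(M_{d}a)^{q}\lesssim |Q|^{1-q/\lambda}$ and $\int_{J\cap Q}(M_{d}a)^{q}\lesssim |J|^{1-q/r}|Q|^{q/r-q/\lambda}$.

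For (b), fix a cube $J$ and apply task (a) to every atom $a_{Q}$. Since all pieces of $a_{Q}$ live in $Q$, only cubes $Q$ with $Q\cap J\neq\emptyset$ contribute, and these split into the small ones, with $\ell_{Q}\le\ell_{J}$ (hence $Q\subseteq 3J$), and the large ones, with $\ell_{Q}>\ell_{J}$ (hence $J\subseteq 3Q$). For the small cubes the first bound gives a sum $\lesssim |J|^{q/\lambda-1}\sum_{Q\subseteq 3J}(|Q|^{1/q-1/\lambda}|s_{Q}|)^{q}$, which is absorbed into $\|\{s_{Q}\}\|_{\lambda,q}^{q}$ by testing the defining supremum against a fixed dyadic dilate of $J$. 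For the large cubes the second bound produces a factor $(|J|/|Q|)^{q/\lambda-q/r}$; grouping these cubes by dyadic scale (only $O(1)$ of them meet $J$ at each scale) and using $|s_{Q}|^{q}\le\|\{s_{Q}\}\|_{\lambda,q}^{q}$ leaves a geometric series in the scale, which converges precisely because $\lambda<r$. Summing the two regimes bounds $|J|^{q/\lambda-1}\sum_{Q}|s_{Q}|^{q}\int_{J\cap Q}(M_{d}a_{Q})^{q}$ by $\|\{s_{Q}\}\|_{\lambda,q}^{q}$ uniformly in $J$, whence $\|f\|_{\textbf{at}\mathcal{HM}_{q}^{\lambda,\infty}}\lesssim \|\{s_{Q}\}\|_{\lambda,q}$; taking the infimum over representations finishes the proof.

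The main obstacle is exactly this reassembly in step (b): unlike the classical Hardy-space situation, the coefficient quasi-norm is a supremum of localized $\ell^{q}$-sums over all cubes $J$, so one must control how the $L^{\infty}$-pieces of every atom $a_{Q}$ distribute relative to an arbitrary testing cube. Decomposing $a$ (rather than its grand maximal function) is what makes this tractable, since it confines all pieces to $Q$ and removes the slowly decaying maximal tail that would otherwise have to be summed; the remaining delicate point is the convergence of the large-scale geometric series, which is where the condition $\lambda<r$ — the range in which the lemma is used (see Remark \ref{remark3}) — enters.
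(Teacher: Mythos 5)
Your architecture --- decompose each $L^r$-atom into $L^\infty$-atoms confined to its supporting cube, then reassemble across the family $\{a_Q\}_Q$ --- matches the paper's, and your step (a) is a legitimate variant of its single-atom decomposition: you run a Calder\'on--Zygmund decomposition at every dyadic height $2^k$ and telescope, whereas the paper iterates the Garcia-Cuerva--Rubio de Francia decomposition at one fixed level $\alpha$ infinitely many times, producing generations of pieces $g_{i_k}$ of height $c\alpha$ with coefficients $s_{i_k}=(c\alpha)^{k+1}|Q|^{-1/\lambda}|Q_{i_k}|^{1/\lambda}$ and cube measures decaying like $(c\alpha^{-r})^{k+1}$. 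Both constructions yield genuine $(q,\lambda,\infty)$-atoms supported in dyadic subcubes of $Q$ with the required vanishing moments, so this is not where the arguments differ in substance.

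The genuine gap is in step (b). The lemma is stated for all $q\leq\lambda<\infty$, with no hypothesis $\lambda<r$; yet your treatment of the cubes $Q$ with $J\subset Q$ produces the factor $\left(|J|/|Q|\right)^{q\left(\frac{1}{\lambda}-\frac{1}{r}\right)}$, and the resulting series over dyadic scales converges only when $\lambda<r$ (it already diverges at $\lambda=r$). So as written you prove a strictly weaker statement than the one claimed. Moreover, the loss is intrinsic to the estimate $\int_{J\cap Q}(M_d a_Q)^q\lesssim |J|^{1-q/r}\Vert a_Q\Vert_{L^r}^q$ on which your large-cube regime rests: for an atom whose $L^r$ mass concentrates on a tiny subcube $J_0\subset Q$ at height $|J_0|^{-1/r}|Q|^{1/r-1/\lambda}$, the level-set pieces your construction places inside $J_0$ really do carry coefficient mass of order $|J_0|^{1-q/r}|Q|^{q/r-q/\lambda}$, so that $|J_0|^{q/\lambda-1}$ times this quantity is $\left(|J_0|/|Q|\right)^{q\left(\frac{1}{\lambda}-\frac{1}{r}\right)}$, which is unbounded as $|J_0|\to 0$ when $\lambda>r$; sharpening constants cannot repair this. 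The paper closes the corresponding estimate differently: it tracks the \emph{localized} measure of the decomposition cubes, $\sum_{Q_{i_k}\subseteq J}|Q_{i_k}|\lesssim (c\alpha^{-r})^{k+1}|J\cap Q|$, and combines it with $|J|^{q/\lambda-1}|Q|^{-q/\lambda}|J\cap Q|\lesssim 1$, which yields a tail factor $\left(|J|/|Q|\right)^{q/\lambda}$, summable for every $\lambda\geq q$. Appealing to Remark \ref{remark3} does not excuse the restriction: $\lambda<r$ there concerns the molecular Theorem \ref{thm-mol-HM-2}, while the present lemma also feeds Theorem \ref{theorem-atomic-decomposition}, where no such restriction is assumed. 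To complete your argument you must either import a localized measure estimate of the paper's type into your level-set scheme, or accept the additional hypothesis $\lambda<r$ --- in which case you have proved a different lemma.
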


\begin{proof} The proof is based on the  {corresponding} theorem for Hardy spaces (see \cite[Theorem 4.10]{CF}).  {Let $a_Q$ to be a $(q, \lambda, r)-$atom and we show that $a_Q = \sum_j s_{Q_j} a_{Q_{j}}$, where $\{a_{Q_{j}}\}_{j}$ are $(q, \lambda, \infty)-$atoms and $\Vert\{ s_{Q_{j}}\}_{j}\Vert_{q,\lambda} \leq C$ independently}. Consider $b_Q=|Q|^{1/\lambda} \, a_Q $ and since $\int_Q|b_Q(x)|^rdx\leq \vert Q\vert $, from Calder\'on-Zygmund decomposition applied for $\vert b_Q\vert^r \in L^{1}(Q)$ at level $\alpha^{r}>0$, there exists a sequence $\{ Q_j\}_{j}$  of disjoint dyadic cubes (subcubes of $Q$) such that $\vert b_{Q}(x)\vert \leq \alpha, \ \forall \, x \notin \bigcup_{j}Q_{j}$, $\alpha^{r}\leq \fint_{Q_j}{|b_{Q}(x)|^{r}dx} \leq 2^{n} \alpha^r$ and $\big\vert \bigcup_{j}Q_{j}\big\vert  \leq \alpha^{-r}  \int_Q|b_Q(x)|^rdx \leq  \vert Q\vert \, \alpha^{-r} $. Let $\mathcal{P}_{N_{q}}$ to be the space of polynomials in $\R^{n}$ with degree at most $N_q$ and $\mathcal{P}_{N_q,j}$ its restriction to $Q_j$. Since $\mathcal{P}_{N_q,j}$ is a subspace of the Hilbert space $L^{2}(Q_{j})$, let $P_{Q_{j}}b \mathcal{P}_{N_q,j}$ to be the unique polynomial such that $\int_{Q_{j}}[b_{Q}(x)-P_{Q_{j}}(b)(x)] x^{\beta}dx=0$ for all $|\beta| \leq N_q$. 
	
Now we write $b_Q=g_0+\sum_j h_j$, where $h_j(x)=  [b_Q(x)-P_{Q_j}(b)(x)]\mathds{1}_{Q_j}(x)$ and $g_0(x) = b_Q(x)$ if $x \notin \bigcup_{j}Q_j$ and $g_0(x) =P_{Q_j}(b)(x)$ if $x \in Q_j$. Clearly $\int h_j(x)x^{\beta}dx=0$ and since $|g_0(x)| \leq c \alpha\,$ almost everywhere (see \cite[Remark 2.1.4 p. 104]{Stein}), this implies
\begin{align*}
\left( \fint_{Q_j}{|h_j(x)|^{r}dx} \right)^{{1}/{r}} \leq \left( \fint_{Q_j}{|b_Q(x)|^{r}dx} \right)^{{1}/{r}} + \left( \fint_{Q_j}{|g_{0}(x)|^{r}dx} \right)^{{1}/{r}}  \leq  c \alpha.
\end{align*} 
For each $j_0 \in \N$, let $b_{j_0}(x):=(c \alpha)^{-1}h_{j_0}(x)$ and write $b_Q(x)=g_0(x)+(c\alpha) \sum_{j_0}{b_{j_0}(x)}$, where $\int_{Q_{j_0}}\vert b_{j_0}(x)\vert^rdx \leq  |Q_{j_0}|$. Applying the previous argument for each $b_{j_0}$ we obtain the identity 
\begin{align*}
	\displaystyle{b_Q=g_0+(c\alpha)\sum_{j_0}{b_{j_0}}= g_0+c\alpha \sum_{j_0}g_{j_0}+(c \alpha)^{2}\sum_{j_0,j_1}b_{j_0,j_1}},
\end{align*}
where $\int_{Q_{j_{0},j_{1}}}\vert b_{j_{0},j_{1}}(x)\vert^rdx \leq  |Q_{j_{0},j_{1}}|$ and $\{ Q_{j_{0},j_1}\}_{j_1}$ is a sequence of disjoint dyadic cubes (subcubes of $Q_{j_{0}}$) such that $\vert g_{j_{0}}(x)\vert \leq c \alpha$ a.e., $\displaystyle{ \alpha^{r}\leq \fint_{Q_{j_{0},j_1}}{|b_{j_{0}}(x)|^{r}dx} \leq 2^{n} \alpha^r }$ and $ \displaystyle{\big|\bigcup_{j_1}Q_{j_{0},j_1}\big| \leq c\,\alpha^{-r}  \int_{Q_{j_0}}|b_{j_{0}}(x)|^rdx \leq  c \vert Q_{j_{0}}\vert \, \alpha^{-r}}$. Employing an induction argument, we can find a family $\{Q_{i_{k-1},j}\}_{j}:=\{Q_{j_0,\cdots, j_{k-1},j}\}_{j}$ of disjoint dyadic subcubes of  $Q_{i_{k-1}}:=Q_{j_0,\cdots, j_{k-1}}$ for $k=1,2,\cdots$ with $i_{k-1}= \left\{j_0,j_1,\cdots, j_{k-1}\right\}$
such that 
\begin{align}\label{soma}
	b_Q
	&=g_{i_0}+c\alpha \sum_{i_1}g_{i_1}+(c \alpha)^{2}\sum_{i_2}g_{i_2}+\cdots + (c\alpha)^{k-1}\sum_{i_{k-1}}g_{i_{k-1}}+(c\alpha)^{k}\sum_{i_{k}}{h_{i_{k}}},
\end{align}
in which $g_{i_{k-1}}$ and $h_{i_{k}}$, for every $i_{k}=(j_0,j_1,\cdots, j_{k-1},j)$, satisfies 
$\vert g_{i_{k-1}}(x)\vert \leq c \alpha$ a.e. $x \in \R^n$, $\displaystyle{\alpha^{r}\leq \fint_{Q_{i_{k-1},j}}{|h_{i_k}(x)|^{r}dx} \leq 2^{n} \alpha^r}$ and $\displaystyle{\big|\bigcup_{j} Q_{i_{k-1},j}\big|\leq   c \vert Q_{i_{k-1}}\vert \, \alpha^{-r}}$. 
The sum at \eqref{soma} is interpreted as $ \sum_{i_{k-1}}g_{i_{k-1}}:=\sum_{{j_0}\in\mathbb{N}} \cdots \sum_{j_{k-1}\in\mathbb{N}}g_{j_0,\cdots,j_{k-1}}$ (analogously to $\sum_{i_{k}}h_{i_{k}}$). We claim that the reminder term  $(c\alpha)^{k}\sum_{i_{k}}h_{i_{k}}$ in \eqref{soma} goes to zero in $L^1(\R^n)$ as $k\rightarrow \infty$. Indeed, writing $Q_{i_{k}}:=Q_{i_{k-1},j}$ for some fixed $j$ we have   
$	\int_{\R^n}{|h_{i_k}(x)|dx}=\int_{Q_{i_k}}{|h_{i_k}(x)|dx} \leq \Big( \int_{Q_{i_k}}{|h_{i_k}(x)|^rdx} \Big)^{\frac{1}{r}} \, |Q_{i_k}|^{1-\frac{1}{r}}
	\leq c \alpha |Q_{i_k}|$
and iterating $(k+1)$-times the previous argument one has
\begin{equation}\label{k-times-iii}
	\sum_{i_{k}}{|Q_{i_{k}}|} \leq \left( \frac{c}{\alpha^r}\right)^{k+1}|Q|.
\end{equation} 
Thus, ${\int{ \left| (c\alpha)^{k} \sum_{i_k}h_{i_k}(x)\right| dx} \leq  (c \alpha)^{k+1} \sum_{i_k}{|Q_{i_k}|} \leq {(c^{2}\alpha^{1-r})}^{(k+1)} |Q|}$. That means, $(c\alpha)^{k}\sum_{i_{k}}h_{i_{k}}(x)$ goes to $0$ in $L^1(\R^n)$ as $k\rightarrow\infty$, provided that $c^{2}\alpha^{1-r}<1$. Therefore,  
\begin{align*}
	b_Q=g_{i_{0}}+c\alpha \sum_{i_1}g_{i_1}+(c \alpha)^{2}\sum_{i_2}g_{i_2}+\cdots + (c\alpha)^{k-1}\sum_{i_{k-1}}g_{i_{k-1}}+(c\alpha)^{k}\sum_{i_{k}}g_{i_{k}}+\cdots\; 
\end{align*}
in $L^1(\R^n)$, where  $|g_{i_{k}}(x)|\leq c\alpha$ a.e. and for all $|\beta|\leq N_{q}$ we have
$\int{x^{\beta}g_{i_{k}}(x)dx} = \int{x^{\beta}b_{i_{k}}(x)dx}+\sum_{j}\int_{Q_{i_{k-1},j}}{x^{\beta}P_{Q_{i_k,j}}b(x)dx} = \int x^{\beta}b_{i_k}(x)dx=0 $. From the above considerations it is clear that  $a_{i_0}:=(c\alpha)^{-1} \, |Q|^{-{1}/{\lambda}} \, g_{i_0}$ and $a_{i_{k}}:=(c\alpha)^{-1} \, |Q_{i_{k}}|^{-{1}/{\lambda}} \, g_{i_{k}}$ are  $(q,\lambda, \infty)-$atoms, for all $k=1,2,\cdots$. Moreover, we can write  
\begin{align}
	a_Q 
	&= s_{i_{0}} a_{i_0} + \sum_{i_1}s_{i_1} a_{i_1} + \sum_{i_2}s_{i_2} a_{i_2}
	+\cdots +\sum_{i_{k}}s_{i_{k}}a_{i_{k}}+\cdots
	\label{key-decAtom}
\end{align}
where each coefficient $\{s_{i_k}\}$ is defined by $s_{i_k}=(c\alpha)^{k+1}\vert Q\vert^{-{1}/{\lambda}}\vert Q_{i_{k}}\vert^{{1}/{\lambda}}$. It remains  to show that $\Vert \{s_{{i_k}}\}_{k}\Vert_{\lambda,q} \leq C$, uniformly. Fixed $J\subset \R^n$ a dyadic cube, we may estimate 
\begin{align*}
	|J|^{\frac{q}{\lambda} -1}\sum_{k=0}^{\infty}	\sum_{Q_{i_k}\subseteq J} \vert s_{i_{k}}\vert^q \vert Q_{i_{k}}\vert^{1-\frac{q}{\lambda}} &= |J|^{\frac{q}{\lambda}-1} \, |Q|^{-\frac{q}{\lambda}}	\sum_{k=0}^{\infty}(c \alpha)^{q(k+1)} \Big(\sum_{\substack{Q_{i_k}\subseteq J}}\vert Q_{i_{k}}\vert\Big) \\
	&\lesssim \vert J\vert^{\frac{q}{\lambda}-1} |Q|^{-\frac{q}{\lambda}}|J \cap Q| \sum_{k=0}^{\infty}(c \alpha)^{q(k+1)}\left(\frac{c}{\alpha^{r}}\right)^{k+1} \leq C
\end{align*}
provided $c^{q+1}\alpha^{q-r}<1$ (weaker than the previous one) and $q \leq \lambda$. Note that here we have used a refinement of \eqref{k-times-iii} given by 
$	\sum_{i_{k} \, : \, Q_{i_k}\subseteq J }{|Q_{i_{k}}|} \lesssim \left( \frac{c}{\alpha^r}\right)^{k+1}|J \cap Q|$
and the uniform control $\vert J\vert^{q/\lambda-1} |Q|^{-q/\lambda}|J \cap Q| \lesssim 1$.  
\,
\end{proof}

The previous lemma allow us to study Hardy-Morrey spaces $\mathcal{HM}_q^{\lambda}(\R^n)$ with any of the atomic spaces $\textbf{at}\mathcal{HM}_{q}^{\lambda,r}(\R^n)$ for $1 \leq r \leq \infty$ provided $q<r$. In addition, we announce an atomic decomposition in terms of $(q,\lambda,r)-$atoms,  {which is a direct consequence of the one proved in \cite[p. 100]{JiaWang-HM} for $(q,\lambda,\infty)-$atoms and the Lemma \ref{atomic-lemma}, since they are in particular $(q,\lambda,r)-$atoms.}

\begin{theorem} \label{theorem-atomic-decomposition}
	{Let $0< q \leq 1 \leq r \leq \infty$ with $q<r$ and $q\leq\lambda<\infty$. Then, $f\in \mathcal{HM}_q^{\lambda}(\R^{n})$ if and only if there exist a collection of $(q,\lambda,r)-$atoms $\{a_Q\}_Q$ and a sequence of complex numbers $\{s_{Q}\}_Q$  such that $f =\sum_{Q}s_{Q}a_{Q}$ in $ \mathcal{S}'(\R^n)$  and $ \Vert f \Vert_{\textbf{at}\mathcal{HM}_{q}^{\lambda}} \approx \Vert f\Vert_{\mathcal{HM}_q^\lambda}$.}
\end{theorem}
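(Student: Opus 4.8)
The plan is to deduce the statement from the already-established $(q,\lambda,\infty)$-atomic decomposition of $\mathcal{HM}_q^\lambda(\R^n)$ obtained in \cite[p.~100]{JiaWang-HM}, combined with Lemma \ref{atomic-lemma}, so that essentially no new analytic estimate is needed: the substantive work has already been carried out in the Calder\'on--Zygmund splitting of Lemma \ref{atomic-lemma}. Concretely, I would first record the known equivalence $\Vert f\Vert_{\mathcal{HM}_q^\lambda}\approx\Vert f\Vert_{\textbf{at}\mathcal{HM}_q^{\lambda,\infty}}$, valid under the standing hypotheses $0<q\leq 1$ and $q\leq\lambda<\infty$, and then transfer it across the scale of $r$ via the two embeddings already available.

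For the necessity direction, given $f\in\mathcal{HM}_q^\lambda(\R^n)$ I would invoke the $\infty$-atomic decomposition to write $f=\sum_Q s_Q a_Q$ in $\mathcal{S}'(\R^n)$ with $(q,\lambda,\infty)$-atoms and $\Vert\{s_Q\}_Q\Vert_{\lambda,q}\lesssim\Vert f\Vert_{\mathcal{HM}_q^\lambda}$. The key (elementary) observation is that every $(q,\lambda,\infty)$-atom is automatically a $(q,\lambda,r)$-atom for the prescribed finite $r$: the moment conditions are identical, while by H\"older's inequality $\Vert a_Q\Vert_{L^r}\leq|Q|^{1/r}\Vert a_Q\Vert_{L^\infty}\leq|Q|^{1/r-1/\lambda}$, which is exactly the required size bound. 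Hence the same expansion is an $r$-atomic representation, yielding $f\in\textbf{at}\mathcal{HM}_q^{\lambda,r}(\R^n)$ with $\Vert f\Vert_{\textbf{at}\mathcal{HM}_q^{\lambda,r}}\leq\Vert f\Vert_{\textbf{at}\mathcal{HM}_q^{\lambda,\infty}}\approx\Vert f\Vert_{\mathcal{HM}_q^\lambda}$; this is precisely the trivial embedding $\textbf{at}\mathcal{HM}_q^{\lambda,\infty}\hookrightarrow\textbf{at}\mathcal{HM}_q^{\lambda,r}$ already noted in the text.

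For the sufficiency direction, starting from an $r$-atomic representation $f=\sum_Q s_Q a_Q$ with $\Vert\{s_Q\}_Q\Vert_{\lambda,q}<\infty$, I would apply Lemma \ref{atomic-lemma} to re-expand each $(q,\lambda,r)$-atom into $(q,\lambda,\infty)$-atoms with a comparable coefficient sequence, obtaining $f\in\textbf{at}\mathcal{HM}_q^{\lambda,\infty}(\R^n)$ with $\Vert f\Vert_{\textbf{at}\mathcal{HM}_q^{\lambda,\infty}}\lesssim\Vert f\Vert_{\textbf{at}\mathcal{HM}_q^{\lambda,r}}$. The known $\infty$-decomposition then returns $f\in\mathcal{HM}_q^\lambda(\R^n)$ with $\Vert f\Vert_{\mathcal{HM}_q^\lambda}\approx\Vert f\Vert_{\textbf{at}\mathcal{HM}_q^{\lambda,\infty}}\lesssim\Vert f\Vert_{\textbf{at}\mathcal{HM}_q^{\lambda,r}}$. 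Chaining the two directions gives the asserted equivalence $\Vert f\Vert_{\textbf{at}\mathcal{HM}_q^{\lambda}}\approx\Vert f\Vert_{\mathcal{HM}_q^\lambda}$. (As a sanity check for the sufficiency bound one may also note that Proposition \ref{prop1} guarantees each atom lies in $\mathcal{HM}_q^\lambda$ uniformly, which is consistent with the route above.)

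I do not expect a genuine obstacle, since the only delicate step — converting $L^r$-normalized atoms into $L^\infty$-normalized ones while keeping control of the coefficient quasi-norm — is exactly what Lemma \ref{atomic-lemma} supplies. The one point that deserves care is the bookkeeping of convergence: I must verify that the re-expansion produced by Lemma \ref{atomic-lemma}, applied atom-by-atom and then reassembled over $Q$, still converges to $f$ in $\mathcal{S}'(\R^n)$ rather than in some weaker sense. This is harmless because the lemma's decomposition of a single atom converges in $L^1(\R^n)$, hence in $\mathcal{S}'(\R^n)$, and the outer sum over $Q$ converges in $\mathcal{S}'(\R^n)$ by the coefficient bound, so the doubly-indexed family can be summed in $\mathcal{S}'(\R^n)$ without ambiguity.
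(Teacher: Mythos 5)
Your proposal is correct and follows essentially the same route as the paper: the authors state that Theorem \ref{theorem-atomic-decomposition} is a direct consequence of the $(q,\lambda,\infty)$-atomic decomposition from \cite[p.~100]{JiaWang-HM} combined with Lemma \ref{atomic-lemma}, using exactly the observation that $(q,\lambda,\infty)$-atoms are $(q,\lambda,r)$-atoms. Your additional remarks on the convergence bookkeeping only make explicit what the paper leaves implicit.
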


\subsection{Molecular decomposition in Hardy-Morrey spaces} \label{Subsection:MolecularDecomposition}



\begin{definition} \label{molecules} Let $0<q \leq 1 \leq r<\infty$ with $q<r$, $q \leq \lambda<\infty$, and $s>n\left({r}/{q}-1\right)$. A function $m(x)$ is called a  $(q,\lambda, {s}, r)-$molecule  in $\mathcal{HM}_q^{\lambda}(\R^{n})$ {, or simply an $L^{r}-$molecule},  if there exist a cube $Q$ such that
	$$
	(M_1) \ \int_{\R^{n}}{|m(x)|^{r}dx} \lesssim \, \ell_{Q}^{\ n\left(1-\frac{r}{\lambda}\right)} \quad (M_2) \ \int_{\R^{n}} |m(x)|^r|x-x_Q|^s dx \lesssim \, \ell_{Q}^{\ s + n\left(1-\frac{r}{\lambda} \right)}
	$$
	and also the concelation condition $\displaystyle (M_3) \ \int_{\R^{n}} {m(x)x^{\alpha}dx}=0$ for all $|\alpha| \leq N_{q}$. 
	
\end{definition}


\begin{remark} \label{remark-molecules}
	\textnormal{Equivalently, we can replace the previous global estimates by $(M_1)$ on $2Q$ and $(M_2)$ on $2Q^c$.}
\end{remark}

\begin{lemma}\label{mol-decomp-generalized}
	Let $m(x)$ to be an  { $L^{r}-$molecule}. Then
	$	m=\sum_Q d_Q \, a_Q+\sum _Q t_Q \, b_Q$  in $L^r(\mathbb{R}^n)$,
	where each  $\{a_Q\}_{_Q}$ are $(q,\lambda,r)-$atoms and $\{b_Q\}_{_Q}$ are $(q,\lambda,\infty)-$atoms, for a suitable sequence of scalars $\{d_Q\}_{Q}$ and  $\{t_Q\}_{Q}$.
\end{lemma}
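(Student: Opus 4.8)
The plan is to adapt the classical molecule-to-atom argument from $H^p$ theory to the Morrey scale: split the molecule into annular pieces, correct each piece to have vanishing moments (producing the $L^r$-atoms), and telescope the leftover polynomial corrections (producing the $L^\infty$-atoms). Normalizing so that $x_Q=0$, write $\mathcal{Q}_k$ for the cube concentric with $Q$ of side-length $2^{k+1}\ell_Q$, so that $\mathcal{Q}_0=Q^{\ast}$ and the $\mathcal{Q}_k$ increase to $\R^n$. Put $R_0=\mathcal{Q}_0$, $R_k=\mathcal{Q}_k\setminus\mathcal{Q}_{k-1}$ for $k\geq1$, and $m_k=m\,\mathds{1}_{R_k}$, so that $m=\sum_k m_k$. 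The first step records the geometric decay of the pieces: on $R_k$ one has $|x-x_Q|\gtrsim 2^k\ell_Q$, so dividing $(M_2)$ by $(2^k\ell_Q)^s$ (and using $(M_1)$ for $k=0$, cf. Remark \ref{remark-molecules}) yields \[ \|m_k\|_{L^r}\lesssim 2^{-ks/r}\,\ell_Q^{\,n(1/r-1/\lambda)}. \]

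Next I extract the $(q,\lambda,r)$-atoms. For each $k$ let $P_k\in\mathcal{P}_{N_q}$ be the unique polynomial with $\int_{\mathcal{Q}_k}(m_k-P_k)x^\alpha\,dx=0$ for all $|\alpha|\leq N_q$; the standard finite-dimensional scaling estimate gives $\|P_k\|_{L^\infty(\mathcal{Q}_k)}\lesssim |\mathcal{Q}_k|^{-1}\int_{R_k}|m|\lesssim |\mathcal{Q}_k|^{-1/r}\|m_k\|_{L^r}$. Then $\mu_k:=m_k-P_k\mathds{1}_{\mathcal{Q}_k}$ is supported on $\mathcal{Q}_k$, has vanishing moments up to $N_q$, and satisfies $\|\mu_k\|_{L^r}\lesssim\|m_k\|_{L^r}$. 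Setting $d_k=\|\mu_k\|_{L^r}|\mathcal{Q}_k|^{1/\lambda-1/r}$, the function $a_k=d_k^{-1}\mu_k$ is a $(q,\lambda,r)$-atom, and the decay above gives $d_k\lesssim 2^{-k[\,s/r-n(1/\lambda-1/r)\,]}$, which is summable because $s>n(r/q-1)\geq n(r/\lambda-1)$.

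The remaining term $L=\sum_k P_k\mathds{1}_{\mathcal{Q}_k}$ is purely polynomial, and reorganizing it is the technical heart. Writing $c_k^\alpha=\int_{R_k}m\,x^\alpha\,dx$, let $\Pi_k\in\mathcal{P}_{N_q}$ be determined by $\int_{\mathcal{Q}_k}\Pi_k x^\alpha\,dx=\sum_{j\geq k}c_j^\alpha$. The key point is that these tail sums converge, and here $s>n(r/q-1)$ is essentially sharp: Hölder's inequality together with the decay of $\|m_k\|_{L^r}$ gives $|c_j^\alpha|\lesssim 2^{-j s/r}\,2^{\,j(|\alpha|+n-n/r)}\ell_Q^{\,|\alpha|+n-n/\lambda}$, so $\sum_j c_j^\alpha$ converges precisely when $s>rN_q+n(r-1)$, which follows from $N_q\leq n(1/q-1)$. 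Since $\sum_j c_j^\alpha=\int m\,x^\alpha=0$ by $(M_3)$, one gets $\Pi_0=0$. Then \[ \nu_k:=P_k\mathds{1}_{\mathcal{Q}_k}-\Pi_k\mathds{1}_{\mathcal{Q}_k}+\Pi_{k+1}\mathds{1}_{\mathcal{Q}_{k+1}} \] has vanishing moments up to $N_q$ (by the defining identities), is supported on $\mathcal{Q}_{k+1}$, and telescopes to $\sum_k\nu_k=L-\Pi_0\mathds{1}_{\mathcal{Q}_0}=L$. The same scaling estimate gives $\|\Pi_k\|_{L^\infty(\mathcal{Q}_k)}\lesssim 2^{-k(s+n)/r}\ell_Q^{-n/\lambda}$, hence $\|\nu_k\|_{L^\infty}\lesssim 2^{-k(s+n)/r}\ell_Q^{-n/\lambda}$; normalizing by $t_k=\|\nu_k\|_{L^\infty}|\mathcal{Q}_{k+1}|^{1/\lambda}$ makes $b_k=t_k^{-1}\nu_k$ a $(q,\lambda,\infty)$-atom with $t_k\lesssim 2^{-k[\,(s+n)/r-n/\lambda\,]}$, again summable under $s>n(r/q-1)$.

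Finally I would verify convergence in $L^r$: both $\sum_k\|\mu_k\|_{L^r}$ and $\sum_k\|\nu_k\|_{L^r}\lesssim\sum_k\|\nu_k\|_{L^\infty}|\mathcal{Q}_{k+1}|^{1/r}\lesssim\sum_k 2^{-ks/r}\ell_Q^{\,n/r-n/\lambda}$ are finite, so $m=\sum_k d_k a_k+\sum_k t_k b_k$ converges absolutely in $L^r(\R^n)$, as claimed. I expect the main obstacle to be the polynomial telescoping for general $N_q$: one must simultaneously justify convergence of the tail-moment series, control the sup-norms of the tail-matching polynomials $\Pi_k$ via the finite-dimensional duality estimates, and track the resulting exponents so as to confirm that $s>n(r/q-1)$ delivers both the $L^r$-convergence and summable atomic coefficients. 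The scalar case $N_q=0$ reduces to the familiar mean-zero telescoping and can serve as a sanity check.
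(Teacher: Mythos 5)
Your proposal is correct and follows essentially the same route as the paper's proof (which adapts \cite[Theorem 7.16]{CF}): annular truncation of the molecule, local polynomial moment corrections via scaled dual bases to produce the $(q,\lambda,r)$-atoms, and a telescoping of tail-moment polynomials, killed at $k=0$ by $(M_3)$, to produce the $(q,\lambda,\infty)$-atoms, with the same decay exponents throughout. The only (cosmetic) differences are that you support the correcting polynomials on the full cubes $\mathcal{Q}_k$ rather than on the annuli, and you record plain geometric decay of the coefficients where the paper additionally spells out the weighted $\ell^q$ bound $\sum_j |d_j|^q|Q_j|^{1-q/\lambda}\lesssim |Q|^{1-q/\lambda}$ that is used later in Theorem \ref{thm-mol-HM-2}; your decay estimates yield that bound directly.
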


\begin{proof}
The proof follows the corresponding result for for Hardy spaces \cite[Theorem 7.16]{CF}. Let $m$ to be a $(q,\lambda, {s}, r)-$molecule centered in the cube $Q$. For each $j\in\mathbb{N}$, let $Q_{j} := Q(x_Q,\ell_{j})$ in which $\ell_{j}=2^j\ell_{Q}$. Consider the collection of annulus $\{E_j\}_{j\in\mathbb{N}_0}$ given by $E_{0}=Q$ and $E_j=Q_{j}\backslash Q_{j-1}$ for $j \geq 1$, and let  $m_{j}(x):=m(x)\, \mathds{1}_{{E_j}}(x)$.  {By the same arguments presented \textcolor{blue}{in} the proof of Lemma \ref{atomic-lemma}}, there exist polynomials $\{\phi^j_{\gamma}(x)\}_{\vert\gamma\vert\leq N_q}$ uniquely determined in $E_j$ such that
\begin{align}\label{cancel-delta}
(2^j\ell_Q)^{|\gamma|} |\phi_{\gamma}^j(x)|\lesssim 1 \quad \text{ and }\quad \frac{1}{|E_j|}\int_{E_j}\phi^j_{\gamma}(x) x^{\beta}dx=\begin{cases}
1, &\gamma=\beta\\
0, &\gamma\neq\beta
\end{cases}
\end{align}
where the implicit constant is uniformly on $E_j$. Let ${m_{\gamma}^j =\fint_{E_j} m_{j}(x)x^{\gamma}dx}$ and consider ${P_{j}(x)=\sum_{|\gamma|\leq N_q}m_{\gamma}^j\phi_{\gamma}^{j}(x)}$. Splitting $m=\sum_{j=0}^{\infty}\left(m_{j}-P_{j}\right)+\sum_{j=0}^{\infty}P_{j}$,
with convergence in $L^r(\R^n)$, we claim that for each $j$, $m_{j}-P_{j}$ is multiple of a $(q,\lambda,r)$-atom and $P_{j}$ is a finite linear combination of $(q,\lambda,\infty)$-atoms.

For the first sum, since $m_{j}$ and $P_{j}$ are supported on $E_j$, so is $m_{j}-P_{j}$ and by definition one has the desired vanish moments up to the order $N_q$. It remains show that $m_{j}-P_{j}$ satisfies the size estimate. Indeed, from conditions $(M_1)$ and $(M_2)$ it follows that for every $j\in \N_{0}$
\begin{equation}
\| m_{j} \|_{L^r} \lesssim |E_j|^{ \frac{1}{r}-\frac{1}{\lambda} } \ (2^{j})^{-\frac{s}{r}+ n\left(\frac{1}{\lambda}-\frac{1}{r} \right)}. \label{estimation-mj}
\end{equation}
Also, from \eqref{cancel-delta} it follows 
$\vert P_{j}(x)\vert\leq \Big(\sum_{|\beta|\leq N_q} \vert \phi_{\beta}^{j}(x)\vert 2^{j|\beta|}\Big) \fint_{E_j} \vert m_{j}(x)\vert dx \lesssim \vert E_j\vert ^{-\frac{1}{r}}\Vert m_{j}\Vert_{L^r}$, 
where the implicit constants are independent of $j$. Hence, if we write $(m_{j}-P_{j})(x)=d_{j} \, a_{Q_{j}}(x)$ for 
$d_{j}=\Vert m_{j}-P_{j}\Vert_{L^r} \, \vert Q_j\vert^{\frac{1}{\lambda}-\frac{1}{r}}$ and $a_{Q_{j}}=\frac{m_{j}-P_{j} }{\Vert m_{j}-P_{j}\Vert_{L^r}} \ \vert Q_j\vert^{\frac{1}{r}-\frac{1}{\lambda}}$,
for each $j\in\mathbb{N}_{0}$, it is clear that $\{a_{Q_j}\}_{j}$ is a sequence of $(q,\lambda,r)$-atoms supported on $Q_j$. Moreover, from (\ref{estimation-mj}) we have 
$
\| m_{j} - P_{j} \|_{L^r} \lesssim \| m_{j} \|_{L^r} \lesssim |Q_j|^{ \frac{1}{r}-\frac{1}{\lambda} } \ (2^{j})^{-\frac{s}{r}+ n\left(\frac{1}{\lambda}-\frac{1}{r} \right)}.
$ 
Hence, since $s>n\left({r}/{q}-1\right)$
\begin{align*}
	\sum_{j=0}^{\infty} \vert d_{j}\vert^q \vert Q_j\vert ^{1-\frac{q}{\lambda}} & \lesssim |Q|^{1-\frac{q}{\lambda}} \sum_{j=0}^{\infty} (2^j)^{q\left[-\frac{s}{r}+n\left( \frac{1}{q}-\frac{1}{r}\right) \right]} 
	\lesssim |Q|^{1-\frac{q}{\lambda}}.
\end{align*}

For the second sum, let $\psi_{\gamma}^{j}(x):=N_{\gamma}^{j+1} \left[ |E_{j+1}|^{-1} \phi_{\gamma}^{j+1}(x)-|E_j|^{-1}\phi_{\gamma}^{j}(x) \right]$, where
	$ N_{\gamma}^{j}=\sum_{k=j}^{\infty}{m^{k}_{\gamma}|E_k|}$ $= \sum_{k=j}^{\infty}{\int_{E_k}{m_Q(x)x^{\gamma}dx}} $.
Then, we can represent $P_{j}$ (using the vanish moments ($M_3$)) as
$\displaystyle{\sum_{j=0}^{\infty}P_{j}(x) = \sum_{j=0}^{\infty}{\sum_{|\gamma|\leq N_q}{\psi_{\gamma}^{j}(x)}}}$. 
%
The function $\psi_{\alpha}^j$ is supported on $E_{j+1}$ and by construction also satisfies vanish moments conditions up to the order $N_q$. It remain to check the size condition.
	Since $|\gamma|\leq n \left( 1/\lambda-1 \right)$ and $s> n \left( r/q-1 \right)$ we have 
		$|N_{\gamma}^{j+1}|	 \leq |Q_j|^{1-{1}/{\lambda}} \  (2^j\ell_Q)^{|\gamma|} (2^j)^{-{s}/{r} +n \left( {1}/{\lambda}-{1}/{r} \right)}$. 
The previous estimate and $(2^j\ell_Q)^{|\gamma|} |\phi_{\gamma}^j(x)|\leq C$ yields for all $x\in E_j$
\begin{equation*}\label{key-est2}
	\left| N^{j+1}_{\gamma}|E_j|^{-1}\phi_{\gamma}^{j}(x) \right| \leq  C  |Q_j|^{-\frac{1}{\lambda}}(2^j)^{-\frac{s}{r} +n \left( \frac{1}{\lambda}-\frac{1}{r} \right)}.
\end{equation*}
Let $\psi_{\gamma}^j=t_{j}\,b_{\gamma}^j$, where $t_{j}=(2^{j})^{-{s}/{r} +n \left( {1}/{\lambda}-{1}/{r} \right)}$ and $b_{\gamma}^{j}(x)=(2^{j})^{{s}/{r} -n \left( {1}/{\lambda}-{1}/{r} \right)}  \ \psi_{\gamma}^j(x)$.
Hence, we can write 
	$\sum_{j=0}^{\infty}P_{j}(x)  = \sum_{j=0}^{\infty}{\sum_{|\gamma|\leq N_q}}t_{j}\,b_{\gamma}^j(x)$,
and for each $j\in\mathbb{N}$ the function $b_{\gamma}^j(x)$ is a $(q,\lambda,\infty)-$atom,  since is  supported on $E_{j+1}$ and satisfies $\vert b_{\gamma}^j(x)\vert \lesssim  |Q_j|^{-\frac{1}{\lambda}}$, as desired. Moreover from $s>n\left({r}/{q}-1\right)$ one has 
\begin{align*}
	\sum_{j=0}^{\infty}|t_{j}|^q |Q_j|^{1-\frac{q}{\lambda}} = |Q|^{1-\frac{q}{\lambda}} \sum_{j=0}^{\infty} (2^j)^{q\left(-\frac{s}{r}+n\left(\frac{1}{q}-\frac{1}{r}\right)\right)} \lesssim |Q|^{1-\frac{q}{\lambda}}.
\end{align*}
\,
\end{proof}

Now we ready to announce a molecular decomposition in Hardy-Morrey spaces.

\begin{theorem}\label{thm-mol-HM-2} 
Let $\left\{ m_{Q}\right\}_{Q}$ be a collection of   {$L^{r}-$molecules} and $\left\{s_{Q}\right\}_{Q}$ be a sequence of complex numbers such that $	\Vert \{s_{Q} \}_{Q} \Vert_{\lambda,q} < \infty$. If the series $f=\sum_{Q}s_{Q}m_{Q}$ converges in $\mathcal{S}'(\R^n)$  {and $\lambda<r$}, then $f \in \mathcal{HM}_{q}^{\lambda}(\R^{n})$ and moreover, $\|f\|_{\mathcal{HM}_{q}^{\lambda}} \lesssim \Vert \{s_{Q}\}_{Q} \Vert_{\lambda,q}$ with implicit constant independent of $f$. 
\end{theorem}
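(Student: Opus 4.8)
The plan is to reduce everything to the atomic decomposition of Theorem~\ref{theorem-atomic-decomposition} by expanding each molecule into atoms via Lemma~\ref{mol-decomp-generalized}. Concretely, it suffices to exhibit $f$ as a sum of $(q,\lambda,r)$-atoms whose coefficient sequence $\{\sigma_P\}_P$ satisfies $\Vert \{\sigma_P\}_P \Vert_{\lambda,q} \lesssim \Vert \{s_Q\}_Q \Vert_{\lambda,q}$; Theorem~\ref{theorem-atomic-decomposition} then yields $f\in\mathcal{HM}_q^\lambda(\R^n)$ together with $\Vert f\Vert_{\mathcal{HM}_q^\lambda}\approx \Vert f\Vert_{\textbf{at}\mathcal{HM}_q^\lambda}\le \Vert \{\sigma_P\}_P\Vert_{\lambda,q}\lesssim \Vert \{s_Q\}_Q\Vert_{\lambda,q}$. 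The hypothesis $\lambda<r$ enters precisely here: it guarantees $\lambda\le r$, under which the $(q,\lambda,r)$-atoms produced below are uniformly controlled in $\mathcal{HM}_q^\lambda$ (Proposition~\ref{prop1}), legitimizing the atomic machinery.

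First I would apply Lemma~\ref{mol-decomp-generalized} to each molecule $m_Q$ (centered at the cube $Q$), writing $m_Q=\sum_{k\ge0}d_{Q,k}\,a_{Q,k}+\sum_{k\ge0}t_{Q,k}\,b_{Q,k}$ in $L^r(\R^n)$, where the $a_{Q,k}$ are $(q,\lambda,r)$-atoms and the $b_{Q,k}$ are $(q,\lambda,\infty)$-atoms, all supported on the concentric dilates $Q_{Q,k}:=Q(x_Q,2^k\ell_Q)$. Reading off the proof of that lemma, the coefficients obey the per-molecule bounds
\begin{equation*}
\sum_{k\ge0}|d_{Q,k}|^q\,|Q_{Q,k}|^{1-\frac{q}{\lambda}}\lesssim |Q|^{1-\frac{q}{\lambda}}\qquad\text{and}\qquad \sum_{k\ge0}|t_{Q,k}|^q\,|Q_{Q,k}|^{1-\frac{q}{\lambda}}\lesssim |Q|^{1-\frac{q}{\lambda}},
\end{equation*}
which is exactly where $s>n(r/q-1)$ is consumed. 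Since $q<r$, every $(q,\lambda,\infty)$-atom is in particular a $(q,\lambda,r)$-atom ($\Vert b\Vert_{L^r}\le\Vert b\Vert_{L^\infty}|Q|^{1/r}\le|Q|^{1/r-1/\lambda}$), so after inserting the scalars $s_Q$ the identity
\begin{equation*}
f=\sum_Q\sum_{k\ge0}(s_Q d_{Q,k})\,a_{Q,k}+\sum_Q\sum_{k\ge0}(s_Q t_{Q,k})\,b_{Q,k}
\end{equation*}
is a representation of $f$ purely in terms of $(q,\lambda,r)$-atoms.

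The heart of the argument is the coefficient estimate. Fix a cube $J$ and sum over those atoms supported inside $J$. Because $Q=Q_{Q,0}\subseteq Q_{Q,k}$, the containment $Q_{Q,k}\subseteq J$ forces $Q\subseteq J$; hence, using $q(\tfrac1q-\tfrac1\lambda)=1-\tfrac{q}{\lambda}$ and then the per-molecule bound,
\begin{align*}
|J|^{\frac{q}{\lambda}-1}\sum_{Q}\sum_{k:\,Q_{Q,k}\subseteq J}\big(|Q_{Q,k}|^{\frac1q-\frac1\lambda}|s_Q d_{Q,k}|\big)^q
&= |J|^{\frac{q}{\lambda}-1}\sum_{Q\subseteq J}|s_Q|^q\sum_{k:\,Q_{Q,k}\subseteq J}|d_{Q,k}|^q|Q_{Q,k}|^{1-\frac{q}{\lambda}}\\
&\lesssim |J|^{\frac{q}{\lambda}-1}\sum_{Q\subseteq J}\big(|Q|^{\frac1q-\frac1\lambda}|s_Q|\big)^q\le \Vert\{s_Q\}_Q\Vert_{\lambda,q}^q.
\end{align*}
The same bound holds verbatim for the $t$-part. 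Taking the supremum over $J$ and a $q$-th root shows the combined coefficient sequence is controlled by $\Vert\{s_Q\}_Q\Vert_{\lambda,q}$, which is the required estimate; Theorem~\ref{theorem-atomic-decomposition} then finishes the proof.

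The point demanding the most care is not the estimate above but the bookkeeping that makes it rigorous. One must justify that the double series converges in $\mathcal{S}'(\R^n)$ and represents $f$ — using that $f=\sum_Q s_Q m_Q$ converges in $\mathcal{S}'$, that each molecular expansion converges in $L^r\hookrightarrow\mathcal{S}'$, and the just-proved finiteness of the combined coefficient norm. One must also reconcile the decomposition with the dyadic indexing of $\textbf{at}\mathcal{HM}_q^{\lambda,r}(\R^n)$: the generating cubes $Q_{Q,k}$ need not be dyadic, and for each fixed $Q,k$ the $(q,\lambda,\infty)$-atoms carry the extra multi-index $\gamma$ with $|\gamma|\le N_q$, so boundedly many atoms sit on a single cube. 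Merging these finitely many atoms per cube costs only a dimensional constant (here $q\le1$ is used), and passing to dyadic cubes costs a harmless dilation; the displayed computation already accommodates arbitrarily many atoms per cube. I expect this reindexing and convergence step to be the main obstacle, whereas the numerology reduces cleanly to the per-molecule bounds of Lemma~\ref{mol-decomp-generalized}.
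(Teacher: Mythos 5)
Your route is genuinely different from the paper's. The paper does not reduce the theorem to Theorem~\ref{theorem-atomic-decomposition}; it estimates $\int_J|M_\varphi f|^q$ directly for each dyadic $J$, splitting the molecules into those with $Q\subseteq J$ and those with $J\subset Q$. For the first group it argues exactly as you do (Lemma~\ref{mol-decomp-generalized} plus the per-molecule coefficient bound, with $\int_{\R^n}|M_\varphi a_{Q_j}|^q\lesssim|Q_j|^{1-q/\lambda}$ from Proposition~\ref{prop1}); your coefficient estimate is, in substance, the paper's estimate of $I_1$, and it is correct. For the second group the paper bounds $|J|^{q/\lambda-1}\int_J|M_\varphi m_Q|^q\lesssim(|J|/|Q|)^{q(1/\lambda-1/r)}$ via H\"older and the global size of the molecule, and sums a geometric series over the chain of dyadic cubes containing $J$; this is \emph{exactly} where $\lambda<r$ is consumed. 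Your proposal has no counterpart to this step: it delegates the contribution of molecules sitting on cubes much larger than $J$ to the black box of Theorem~\ref{theorem-atomic-decomposition}.

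That delegation is where the gap is. Notice that nothing in your written argument actually uses $\lambda<r$: Lemma~\ref{mol-decomp-generalized}, your coefficient estimate, and Theorem~\ref{theorem-atomic-decomposition} (whose stated hypotheses do not include $\lambda\le r$) all go through without it, so as written you would be proving the theorem with the hypothesis deleted --- which Remark~\ref{remark3} explicitly warns against. Your attribution of $\lambda<r$ to Proposition~\ref{prop1} ``legitimizing the atomic machinery'' is not right, since Theorem~\ref{theorem-atomic-decomposition} rests on Lemma~\ref{atomic-lemma} and the $L^\infty$ theory, not on Proposition~\ref{prop1}. The hypothesis has to resurface in the step you defer as ``bookkeeping'': showing that the doubly indexed family $\{s_Q d_{Q,k}a_{Q,k},\,s_Q t_{Q,k}b_{Q,k}\}$ can be regrouped into a single $\mathcal{S}'$-convergent series indexed by dyadic cubes. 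The given data only provide iterated convergence ($\sum_Q$ in $\mathcal{S}'$, then $\sum_k$ in $L^r$); regrouping by target cube mixes terms across infinitely many molecules and requires unconditional convergence. Testing an $L^r$-atom on a large cube $P$ against $\phi\in\mathcal{S}$ gives only $|\langle a_P,\phi\rangle|\lesssim|P|^{1/r-1/\lambda}\Vert\phi\Vert_{L^{r'}}$, which decays as $|P|\to\infty$ precisely when $\lambda<r$; summing this over the chain of large cubes is the analogue of the paper's $I_2$ and is not avoidable. Until that convergence/rearrangement is carried out --- and it is where the hypothesis does its work --- the proof is incomplete.
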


\begin{proof}
	Suppose $f=\sum_{Q}s_{Q}m_{Q}$ in $\mathcal{S}'(\R^n)$ and $\| \{s_Q\}_{Q} \|_{\lambda,q}<\infty$. Since $0<q\leq 1$, for a fixed dyadic cube  $J \subset \R^n$ we may estimate  $\int_{J}|M_{\varphi}f(x)|^{q}dx$ by
	\begin{align*}
		\sum_{Q \subseteq J}|s_{Q}|^{q} \int_{J}|M_{\varphi}m_{Q}(x)|^{q}dx + \sum_{J \subset Q}|s_{Q}|^{q} \int_{J}|M_{\varphi}m_{Q}(x)|^{q}dx 
		& := I_{1}+I_{2}.
	\end{align*}
	\noindent \textbf{Estimate of $I_{1}$}.	
		From Lemma \ref{mol-decomp-generalized}, write $m_{Q}=\sum_{j=0}^{\infty}d_{_j}a_{Q_j}$ (convergence in $L^{r}$)
		where $\left\{a_{Q_j}\right\}_{j}$ are $(q,\lambda,r)-$atoms and moreover $	\sum_{j=0}^{\infty} \vert d_{j}\vert^q \vert Q_j\vert ^{1-\frac{q}{\lambda}}  \lesssim |Q|^{1-\frac{q}{\lambda}}$.
		It follows from analogous estimates of Proposition \ref{prop1} that 
		\begin{align*}
			I_{1}  &\lesssim \sum_{Q \subseteq J}|s_{Q}|^{q} \sum_{j=0}^{\infty}|d_{Q_{j}}|^{q}\int_{J}|M_{\varphi}a_{Q_{j}}(x)|^{q}dx  \lesssim \sum_{Q \subseteq J}|s_{Q}|^{q}  \sum_{j=0}^{\infty}|d_{Q_{j}}|^{q}|Q_{j}|^{1-\frac{q}{\lambda}} \\
			& \lesssim \sum_{Q \subseteq J}|s_{Q}|^{q} |Q|^{1-\frac{q}{\lambda}} \lesssim |J|^{1-\frac{q}{\lambda}} \, \|\{s_{Q}\}_{Q}\|_{\lambda,q}^q.
		\end{align*}
		\noindent \textbf{Estimate of $I_{2}$.} Since $1< r < \infty$ and 
		$M_{\varphi}$ is bounded on $L^r(\R^n)$, it follows
			\begin{align*}
				|J|^{\frac{q}{\lambda}-1} \int_{J}{\vert M_{\varphi}m_Q(x)\vert ^{q}dx} 
				&\leq |J|^{q\left(\frac{1}{\lambda}-\frac{1}{r}\right)} \left( \sum_{j=0}^{\infty} (2^j\ell_Q)^{-s} \int_{E_j}{|m_Q(x)|^r|x-x_Q|^{s}dx} \right)^{\frac{q}{r}} \\
				&\lesssim |J|^{q\left(\frac{1}{\lambda}-\frac{1}{r}\right)} \, |Q|^{q \left(\frac{1}{r}-\frac{1}{\lambda}\right)} \left( \sum_{j=0}^{\infty} 2^{-js}  \right)^{\frac{q}{r}} \simeq  \left( \frac{|J|}{|Q|} \right)^{q \left(\frac{1}{\lambda}-\frac{1}{r}\right)}.
			\end{align*}
{If $r=1$ and $0<q<1$, we proceed like in \eqref{weak-ineq} and then $	|J|^{\frac{q}{\lambda} -1}\int_{J}{\vert M_{\varphi}m_Q(x)\vert ^{q}dx} \lesssim$
		\begin{align}
		|J|^{\frac{q}{\lambda} -1} \left[|J| \int_{0}^{|Q|^{1-\frac{1}{\lambda}}|J|^{-1}} \omega^{q-1}d\omega + |Q|^{-1+\frac{1}{\lambda}} \int_{|Q|^{1-\frac{1}{\lambda}}|J|^{-1}}^{\infty} \omega^{q-2}d\omega \right] \lesssim \left( \frac{|J|}{|Q|} \right)^{q \left(\frac{1}{\lambda}-1\right)}. \nonumber 
	\end{align}}
	 Fixed a dyadic cube $J$, we point out there exists a subset $N\subseteq \N$ such that  each cube $J \subset Q$ is uniquely determined by a dyadic cube $Q_{k,J} \in \big\{ Q\; \text{dyadic}: J\subset Q \text{ and }\ell_{Q}=2^{k}\ell_{J} \big\}$. Hence, we can write
$\displaystyle{ \sum_{J \subset Q} |s_{Q}|^{q} \left(\frac{|J|}{|Q|}\right)^{\gamma}=\sum_{k \in N} \vert s_{Q_{k,J}}\vert^q \, 2^{-kn\gamma}}$
with $\gamma:={1}/{\lambda}-{1}/{r}>0$. Then, 
\begin{align}
|J|^{\frac{q}{\lambda}-1}\sum_{J \subset Q}|s_{Q}|^{q}\Vert M_{\varphi}(a_Q)\Vert_{L^q(J)}^q  
&\lesssim \sum_{k \in N}\left( |s_{Q_{k,J}}|^{q}  |Q_{k,J}|^{1-\frac{q}{\lambda}} \right) |Q_{k,J}|^{\frac{q}{\lambda}-1}2^{-kn\gamma} \nonumber \\
&\leq \sum_{k \in N} \left( \sum_{Q \subseteq Q_{k,J}} |s_{Q}|^{q}  |Q|^{1-\frac{q}{\lambda}} \right) |Q_{k,J}|^{\frac{q}{\lambda}-1}2^{-kn\gamma} \nonumber \\
&\lesssim \| \left\{s_{Q} \right\}_{Q}\|_{\lambda,q}^q\sum_{k \in N}2^{-kn\gamma} \nonumber
 \lesssim \| \left\{s_{Q} \right\}_{Q}\|_{\lambda,q}^q. \nonumber
\end{align} 
\,
\end{proof}

 {
\begin{remark}\label{remark3}
The Theorem \ref{thm-mol-HM-2} covers \cite[Theorem 2.6]{JiaWang-SingularIntegrals} when $r=2$ where the natural restriction $\lambda<2$ was omitted. 
\end{remark}
}

\section{Proof of Theorem \ref{theorem-CZ-hardy-morrey}} \label{Section:ProofOfMainTheorem}

\begin{proof} 
Let $a$ be a $(q,\lambda,r)-$atom supported in the cube $Q$. From Theorem \ref{thm-mol-HM-2}, it suffices to show that $Ta$ is a $(q,\lambda,s,r)-$ molecule associated to $Q$. Suppose first that $\ell_Q \geq 1$. Since $T$ is bounded in $L^2(\R^n)$ to itself and $1 \leq r \leq 2$, condition $(M_1)$ follows by 
	\begin{align} \label{M1-control}
	\int_{2Q} |Ta(x)|^rdx \leq |2Q|^{1-\frac{r}{2}} \, \| Ta \|_{L^2}^{r} \lesssim |Q|^{1-\frac{r}{2}} \| a \|_{L^2}^{r} \lesssim |Q|^{1-\frac{r}{\lambda}} \simeq \ell_{Q}^{\ n \left(1-\frac{r}{\lambda}\right)}.
	\end{align}
	For $(M_2)$ using the moment condition of the atom $a$, Minkowski inequality and \eqref{Ls-hormander}, we estimate $\int_{2Q^c}|Ta(x)|^{r}|x-x_Q|^{s}dx$ by
\begin{align*}
		& \sum_{j=1}^{\infty}{\int_{C_j(x_Q,\ell_Q)}{\left| \int_{Q}{[K(x,y)-K(x,x_Q)]a(y)dy} \right|^{r} \, |x-x_Q|^{s}dx}} \\
		& \leq  \sum_{j=1}^{\infty}{ (2^j\ell_Q)^{s} \left\{ \int_{Q}{|a(y)| \left[ \int_{C_j(x_Q,\ell_Q)}{|K(x,y)-K(x,x_Q)|^{r} dx} \right]^{\frac{1}{r}} dy} \right\}^{r}} \nonumber \\
		& \lesssim  \sum_{j=1}^{\infty}{ (2^j \ell_Q)^{\,s -n(r-1)} \,\, 2^{-j r\delta}} \,\, \ell_Q^{\, rn\left( 1-\frac{1}{\lambda} \right)} 
		 \simeq \, \ell_Q^{\, r + n\left( 1-\frac{r}{p} \right)} \sum_{j=1}^{\infty}{2^{j\left[s-n(r-1)-r\delta \right]}} \simeq \, \ell_Q^{\, s + n\left( 1-\frac{r}{p} \right)}, \nonumber 
	\end{align*}
	assuming $s<n(r-1)+r\delta$.  {We remark that for the case $r=1$, one needs to consider $(q,\lambda,s,1)-$molecules and hence $0<q\leq \lambda<1$.} Suppose now that $\ell_Q<1$. Since $T$ is a bounded operator from $L^p(\R^n)$ to $L^2(\R^n)$ and $1< r \leq 2$, condition $(M_1)$ follows by
	$$
	\int_{2Q} |Ta(x)|^rdx \leq |2Q|^{1-\frac{r}{2}} \, \| Ta \|_{L^2}^{r} \lesssim |Q|^{1-\frac{r}{2}} \| a \|_{L^p}^{r} \lesssim |Q|^{1-\frac{r}{\lambda}+r\left(\frac{1}{p}-\frac{1}{2}  \right)} \lesssim |Q|^{1-\frac{r}{\lambda}}.
	$$
To estimate the global $(M_2)$ condition, we consider $0<\rho \leq \sigma \leq 1$ a parameter that will be chosen conveniently latter, denote by $2Q^{\rho} := Q(x_Q,2\ell_{Q}^{\rho})$ and split the integral over $\R^n$ into $2Q^{\rho}$ and $(2Q^{\rho})^{c}$.
	For $2Q^{\rho}$ we use the boundedness from $L^{p}(\R^n)$ to $L^{2}(\R^n)$ again and obtain
	\begin{align}
		\int_{2Q^{\rho}} |Ta(x)|^r \, |x-x_Q|^{s}dx &\lesssim \ell_{Q}^{\, s \rho} |4Q^{\rho}|^{1-\frac{r}{2}} \| Ta \|_{L^2}^{r} \lesssim \ell_{Q}^{\, \rho s +n\rho \left(1-\frac{r}{2} \right)} \| a \|_{L^p}^{r} \nonumber \\
		& \lesssim \ell_{Q}^{\, \rho s +n \left[\rho-\frac{r\rho}{2}+r \left( \frac{1}{p}-\frac{1}{\lambda} \right) \right]} 
		 \lesssim \ell_{Q}^{\, s+n\left(1-\frac{r}{\lambda} \right)}, \nonumber
	\end{align}
	assuming $s \leq -n \left( 1-\frac{r}{2} \right)+\frac{nr}{1-\rho}\left( \frac{1}{p}-\frac{1}{2} \right)$. For $(2Q^{\rho})^{c}$, we use \eqref{1-hormander-2a} to estimate $\int_{(2Q^{\rho})^c}|Ta(x)|^{r}|x-x_Q|^{s}dx $ by
	\begin{align} \label{choose-rho}
		 & \sum_{j=1}^{\infty}{ (2^j\ell_{Q}^{\, \rho})^{s} \left\{ \int_{Q}{|a(y)| \left[ \int_{C_j(x_Q,\ell_{Q}^{\, \rho})}{|K(x,y)-K(x,x_Q)|^{r} dx} \right]^{\frac{1}{r}} dy} \right\}^{r}} \nonumber \\
		& \lesssim \ \sum_{j=1}^{\infty}{(2^j\ell_{Q}^{\, \rho})^{s} \,\, \left(|C_j(x_Q,\ell_{Q}^{\, \rho})|^{{\frac{1}{r}-1+\frac{\delta}{n} \left(\frac{1}{\rho}-\frac{1}{\sigma} \right)}} \,\, 2^{-\frac{j\delta}{\rho}} \right)^{r} \,\, \ell_{Q}^{\, rn\left( 1-\frac{1}{\lambda} \right) }} \nonumber \\
		& \simeq \,  \ell_{Q}^{\, \rho s + n \left[ r+\frac{r\delta}{n}-r\rho \left(1-\frac{1}{r}+\frac{\delta}{n\sigma}\right) -\frac{r}{\lambda} \right]} \sum_{j=1}^{\infty}{2^{j\left[ s-n(r-1)-\frac{r\delta}{\sigma} \right]}} \nonumber \\
		& \lesssim \ \ell_{Q}^{\, \rho s + n \left[ \rho \left(1-\frac{r}{2} \right)+r \left(\frac{1}{p}-\frac{1}{\lambda}\right) \right]} \leq \ell_{Q}^{\, s+n\left(1-\frac{r}{\lambda}\right)}, \nonumber
	\end{align}			  
	where the convergence follows assuming $ s <n(r-1)+\frac{r\delta}{\sigma}$ and we choose $\rho$ to be such that 
	$
	\displaystyle{r+\frac{r\delta}{n}-\rho \left(r-1+\frac{r\delta}{n\sigma}\right)= \rho \left( 1-\frac{r}{2} \right)+ \frac{r}{p} \ \Leftrightarrow \ \rho := \frac{n \left( 1-\frac{1}{p}\right)+\delta}{\frac{n}{2}+\frac{\delta}{\sigma}}.}
	$ 
	By the choice of $\rho$ we have
	$$
	-n \left( 1-\frac{r}{2} \right)+\frac{nr}{1-\rho}\left( \frac{1}{p}-\frac{1}{2} \right) < n(r-1)+r\delta < n(r-1)+\frac{r\delta}{\sigma}.
	$$
	In particular, collecting the restrictions on $s$ we get
	$$
	n\left(\frac{r}{q}-1 \right)< s \leq -n \left( 1-\frac{r}{2} \right)+\frac{nr}{1-\rho}\left( \frac{1}{p}-\frac{1}{2} \right) \ \ \Rightarrow \ \ \frac{1}{q} < \frac{1}{2}+\dfrac{\beta \left( \frac{\delta}{\sigma}+\frac{n}{2} \right)}{n \left( \frac{\delta}{\sigma}-\delta+\beta \right)} := \frac{1}{q_{0}}. 
	$$
{We point out that when $\sigma=1$, only condition $s<n(r-1)+r\delta$ is imposed to verify $(M_1)$ and $(M_2)$.} Condition $(M_3)$, given formally by $T^{*}(x^{\alpha})=0$ for all $|\alpha| \leq  N_{q}$, is trivially valid, since $n/(n+\delta) < q_{0}< q \leq 1$ implies  $N_{q} \leq \lfloor \delta \rfloor $.
\end{proof}

\begin{remark}
 {\textnormal{The previous proof remains the same if one consider integral conditions incorporating derivatives of the kernel. For a complete discussion and the precise definition of such conditions see \cite[Section 4.2]{VasconcelosPicon}.}}
\end{remark}

%
%
%

\end{document}